\documentclass[11pt,reqno]{amsart}

\usepackage[a4paper,left=35mm,right=35mm,top=30mm,bottom=30mm,marginpar=25mm]{geometry}

\usepackage{amsfonts}
\usepackage{amsmath}
\usepackage{amssymb}
\usepackage{amsthm}
\usepackage[latin1]{inputenc}
\usepackage{eurosym}
\usepackage[dvips]{graphics}
\usepackage{graphicx}
\usepackage{epsfig}
\usepackage{hyperref}
\usepackage{dsfont}
\usepackage{color}

\usepackage[displaymath,mathlines]{lineno}

\allowdisplaybreaks

\usepackage{hyperref}

\usepackage{mathtools}
\mathtoolsset{showonlyrefs}

\usepackage{ifthen}
\makeindex

\renewcommand{\div}{\operatorname{div}}

\newcommand{\Rr}{{\mathbb{R}}}

\newcommand{\Tt}{{\mathbb{T}}}

\newcommand{\Hh}{{\overline{H}}}

\newcommand{\Uu}{{\mathcal{U}}}
\newcommand{\Vv}{{\mathcal{V}}}
\newcommand{\Ww}{{\mathcal{W}}}

\newcommand{\epsi}{\varepsilon}

\def\leq{\leqslant}
\def\geq{\geqslant}

\numberwithin{equation}{section}

\newtheoremstyle{thmlemcorr}{10pt}{10pt}{\itshape}{}{\bfseries}{.}{10pt}{{\thmname{#1}\thmnumber{
                        #2}\thmnote{ (#3)}}}
\newtheoremstyle{thmlemcorr*}{10pt}{10pt}{\itshape}{}{\bfseries}{.}\newline{{\thmname{#1}\thmnumber{
                        #2}\thmnote{ (#3)}}}
\newtheoremstyle{defi}{10pt}{10pt}{\itshape}{}{\bfseries}{.}{10pt}{{\thmname{#1}\thmnumber{
                        #2}\thmnote{ (#3)}}}
\newtheoremstyle{remexample}{10pt}{10pt}{}{}{\bfseries}{.}{10pt}{{\thmname{#1}\thmnumber{
                        #2}\thmnote{ (#3)}}}
\newtheoremstyle{ass}{10pt}{10pt}{}{}{\bfseries}{.}{10pt}{{\thmname{#1}\thmnumber{
                        A#2}\thmnote{ (#3)}}}

\theoremstyle{thmlemcorr}
\newtheorem{theorem}{Theorem}
\numberwithin{theorem}{section}
\newtheorem{lemma}[theorem]{Lemma}
\newtheorem{corollary}[theorem]{Corollary}
\newtheorem{proposition}[theorem]{Proposition}

\theoremstyle{thmlemcorr*}
\newtheorem{theorem*}{Theorem}
\newtheorem{lemma*}[theorem]{Lemma}
\newtheorem{corollary*}[theorem]{Corollary}
\newtheorem{proposition*}[theorem]{Proposition}
\newtheorem{problem*}[theorem]{Problem}
\newtheorem{conjecture*}[theorem]{Conjecture}

\theoremstyle{defi}

\newtheorem{hyp}{Assumption}

\theoremstyle{remexample}
\newtheorem{remark}[theorem]{Remark}

\DeclareMathOperator{\diverg}{div}

\theoremstyle{ass}

\begin{document}

\title[The variational structure of the mean-field games systems]{The variational structure and time-periodic solutions for mean-field games systems}

\author{Marco Cirant}
\address[M. Cirant]{
Dipartimento di Matematica ``Tullio Levi-Civita'', Universit\`a di Padova,
via Trieste 63, 35121 Padova (Italy)}
\email{cirant@math.unipd.it}

\author{Levon Nurbekyan}
\address[L. Nurbekyan]{
	King Abdullah University of Science and Technology (KAUST), CEMSE Division, Thuwal 23955-6900, Saudi Arabia. }
\email{levon.nurbekyan@kaust.edu.sa}

\keywords{Infinite-dimensional differential games; Congestion problems; Saddle-point formulation}
\subjclass[2010]{35Q91, 35Q93, 35A01} 


\thanks{	
	M. Cirant was partially supported by the Fondazione CaRiPaRo Project ``Nonlinear Partial Differential Equations: Asymptotic Problems and Mean-Field Games'', and by the Gruppo Nazionale per l'Analisi Matematica, la Probabilit\`a e le loro Applicazioni (GNAMPA) of the Istituto Nazionale di Alta Matematica (INdAM)}
\thanks{
	L. Nurbekyan was partially supported by King Abdullah University of Science and Technology (KAUST) baseline and start-up funds.}
\date{\today}

\begin{abstract}
Here, we observe that mean-field game (MFG) systems admit a two-player infinite-dimensional general-sum differential game formulation. We show that particular regimes of this game reduce to previously known variational principles. Furthermore, based on the game-perspective we derive new variational formulations for first-order MFG systems with congestion. Finally, we use these findings to prove the existence of time-periodic solutions for viscous MFG systems with a coupling that is not a non-decreasing function of density.
\end{abstract}

\maketitle

\tableofcontents

\section{Introduction}

In this note, we discuss the variational structure of mean-field game (MFG) systems and apply our findings to the construction of time-periodic solutions for systems with decreasing coupling.

MFG systems independently introduced by Lasry and Lions in \cite{LL061,LL062,LL07} and by Huang, Malham\'{e} and Caines \cite{HCM06,HCM07} is a framework to model populations that have a huge number of indistinguishable agents that play a differential game. In this framework, as in statistical physics, one models a huge population as a continuum of agents in some state space. Furthermore, the state of this population is modeled by the distribution of the agents in the state space. Hence, each agent in this game devises an optimal strategy based on the  distribution of the population; that is, on the statistical rather than individual information about the positions of other agents. As a result, one obtains a system of PDE, a MFG system, that characterizes the optimal actions of the agents and the evolution of their distribution in the state space. MFG systems are analogs of macroscopic equations from statistical physics in the game-theoretic framework.

Currently, MFG theory is a very active research direction with numerous applications in economics \cite{gll'11,moll,GNP}, finance \cite{gll'11,cl16,hjn15}, industrial engineering \cite{HCM06,HCM07}, crowd dynamics \cite{dogbe10}, knowledge growth \cite{blw16,blw17} and more. For further details on MFG theory we refer to \cite{LCDF,CardaNotes,gll'11,befreyam'13,Gomes2014,GoBook,cardela'18, notebari} and references therein.

From the PDE perspective, a typical MFG system has the form
\begin{equation}\label{eq:main}
\begin{cases}
-u_t-\epsi \Delta u + H\left(x, \nabla u,m\right)=0,\\
m_t-\epsi \Delta m - \mathrm{div}  \left(m \nabla_p H\left(x,\nabla u,m\right)\right)=0,\\
m> 0,\ m(x,0) = m^0(x),\ u(x,T)=u^T(x),~(x,t)\in \Tt^d \times [0,T].
\end{cases}
\end{equation}
Here, we denote by $\Tt^d$ the $d$-dimensional flat torus that is the state-space for a continuum of agents. Next, $m(\cdot,t),~0\leq t \leq T,$ denotes the density of the distribution of the agents in the state-space at time $t$. In this model, each agent faces a stochastic optimal control problem with an independent Brownian motion of intensity $\epsi \geq 0$, terminal time $T$, terminal cost function $u^T$, and a Lagrangian that depends on the distribution $m$ and gives rise to a Hamiltonian $H:\Tt^d \times \Rr^d \times \Rr_{++} \to \Rr$. The dependence of $H$ on $m$ is called the \textit{coupling} of the MFG system. Furthermore, $(x,t)\mapsto u(x,t)$ denotes the value function of this optimal control problem, and $m^0$ is the initial distribution density of the agents. We assume that $m^0>0$, and $\int_{\Tt^d} m^0=1$. The unknowns in \eqref{eq:main} are $u$ and $m$.

The first equation in \eqref{eq:main} is the Hamilton-Jacobi equation corresponding to the optimal control problem faced by each agent. The second equation in \eqref{eq:main} is the Fokker-Planck equation that governs the evolution of the density of the agents that act optimally. The ergodic version of \eqref{eq:main} is
\begin{equation}\label{eq:mainstationary}
\begin{cases}
-\epsi \Delta u + H(x, \nabla u,m)=\bar{H},\\
-\epsi \Delta m - \mathrm{div}  \left(m \nabla_p H(x,\nabla u,m)\right)=0,\\
m> 0,\ \int\limits_{\Tt^d} m(x)dx=1,~x\in \Tt^d.
\end{cases}
\end{equation}
This previous system corresponds to a model where agents solve an ergodic (long-time average) optimal control problem. In \eqref{eq:main} the unknowns are $u,m$ and $\Hh$. Latter, is the ergodic constant or the effective Hamiltonian and can be thought of as the Lagrange multiplier for the constraint $\int_{\Tt^d}m=1$.

We are interested in variational formulations of \eqref{eq:main} and \eqref{eq:mainstationary}; that is, we aim at finding functionals of $(m,u)$ that yield \eqref{eq:main} and \eqref{eq:mainstationary} as first-order optimality conditions.

The variational structure of MFG systems is not new. In a seminar paper on MFG, \cite{LL07}, Lasry and Lions pointed out that if $H$ has a  form
\begin{equation}\label{eq:H_separable}
H(x,p,m)=H_0(x,p)-f(x,m),\quad (x,p,m)\in \Tt^d \times \Rr^d\times \Rr_{++},
\end{equation}
for some $H_0:\Tt^d \times \Rr^d \to \Rr,~f:\Tt^d \times \Rr_{+} \to \Rr$ then \eqref{eq:main}, \eqref{eq:mainstationary} can be interpreted as optimality conditions for two dual optimal control problems of PDE -- \eqref{eq:FPcontrol} and \eqref{eq:HJcontrol}. Latter are reminiscent of the dynamical formulation of the Optimal Transportation Problem by Benamou and Brenier \cite{benbren'00}. These ideas were successfully used to analyze \eqref{eq:main}, \eqref{eq:mainstationary} in various settings where there are no direct regularizing mechanisms from elliptic and parabolic PDE theory. In particular, first-order systems were addressed in \cite{car'15,graber'14,CardaGraber,grames'18}, degenerate second-order systems were treated in \cite{CGPT}, and problems with constraints on $m$ were considered in \cite{MeSil,carmesa'16}. Moreover, the optimal control formulations in \cite{LL07} yield numerical solution methods for \eqref{eq:main}, \eqref{eq:mainstationary} by optimization techniques as in \cite{benbren'00}. We refer to \cite{bencar'15,bencarsan'17} and references therein for an account on these numerical methods. Additionally, similar ideas were used to treat mean-field type control problems with congestion in \cite{AL16,al'16b}.

Unfortunately, the optimal control formulation in \cite{LL07} does not extend to systems where $H$ is not of the form \eqref{eq:H_separable}. As a result, there is no systematic approach to analyze \eqref{eq:main}, \eqref{eq:mainstationary} when $H$ is not of the form \eqref{eq:H_separable} and there is no regularization due to ellipticity. For instance, suppose that
\begin{equation}\label{eq:H_congestion}
H(x,p,m)=m^{\alpha} H_0 \left(x,\frac{p}{m^\alpha}\right)-f(x,m),\quad (x,p,m)\in \Tt^d \times \Rr^d\times \Rr_{++},
\end{equation}
for some $H_0:\Tt^d \times \Rr^d \to \Rr,~f:\Tt^d \times \Rr_{+} \to \Rr$, and $\alpha > 0$. In this case, \eqref{eq:main}, \eqref{eq:mainstationary} correspond to so called \textit{soft congestion} models. In these models, agents pay more for moving in denser areas. The congestion strength is modeled by $\alpha$. Note that the no-congestion case, $\alpha=0$, corresponds to \eqref{eq:H_separable}.

As mentioned before, for $H$ as in \eqref{eq:H_congestion} there is no optimal control formulation of \eqref{eq:main}, \eqref{eq:mainstationary} analogous to the one in \cite{LL07}. Additionally, the singularity of $H$ at $m=0$ creates substantial difficulties when using purely PDE methods -- see \cite{govos'15,gomi'15,gra'15,evago'17,achpor'16} for second-order systems.

However, for a specific $H$ it may still be possible to find a non-standard variational formulation for \eqref{eq:main}, \eqref{eq:mainstationary}. In \cite{evafegonuvos'18}, for instance, authors found a new variational formulation for \eqref{eq:mainstationary} when $\epsi=0$ and
\begin{equation}\label{eq:H_congestion1}
H(x,p,m)=\frac{|p+Q|^\gamma}{\gamma m^{\alpha}}-f(x,m),\quad (x,p,m)\in \Tt^d \times \Rr^d\times \Rr_{++},
\end{equation}
for some $f:\Tt^d \times \Rr_{++} \to \Rr,~Q\in \Rr^d$, and $1< \alpha \leq \gamma$. The formulation in \cite{evafegonuvos'18} yielded well-defined variational solutions that are unique and can be numerically calculated by optimization techniques.

In \cite{alfego'17,FG2}, authors used yet another variational approach to MFG systems that is closely related to the uniqueness of solutions for \eqref{eq:main}, \eqref{eq:mainstationary}. In \cite{LCDF}, Lions derived a sufficient condition that guarantees uniqueness of smooth solutions for \eqref{eq:main}, \eqref{eq:mainstationary}. The condition reads as
\begin{equation}\label{eq:Lionsuniq}
\begin{pmatrix}
2 \nabla^2_{p} H(x,p,m)&\frac{\partial }{\partial m} \nabla_p H(x,p,m)\\
\frac{\partial }{\partial m} \nabla^\top_p H(x,p,m)&-\frac{2}{m}\frac{\partial H(x,p,m)}{\partial m }
\end{pmatrix}\geq 0,
\end{equation}
for all $(x,p,m)\in \Tt^d \times \Rr^d \times \Rr_{++}$. In \cite{alfego'17,FG2}, Gomes and coauthors observed that under \eqref{eq:Lionsuniq} the map
\begin{equation*}
A:\begin{pmatrix}
m\\
u
\end{pmatrix}
\mapsto
\begin{pmatrix}
u_t+\epsi \Delta u - H(x,\nabla u,m)\\
m_t-\epsi \Delta m - \mathrm{div}  \left(m \nabla_p H(x,\nabla u,m)\right)
\end{pmatrix}
\end{equation*}
is a monotone operator. Therefore, \eqref{eq:mainstationary} reduces to finding the zeros of a monotone operator. Furthermore, in \cite{alfego'17,FG2} authors defined weak solutions of \eqref{eq:mainstationary} using variational inequalities and proved their existence using Minty's method from monotone operators theory. Moreover, Gomes and coauthors developed numerical solution methods for \eqref{eq:mainstationary} and finite-state version of \eqref{eq:main} in \cite{alfego'17} and \cite{GJS2}, respectively, using monotone flows. For an overview of monotonicity methods for MFG systems we refer to \cite{gomesIPAM'17}.

In this note, we aim at developing a systematic way to search for non-standard variational formulations for MFG systems. We formally show that solutions of \eqref{eq:main} and \eqref{eq:mainstationary} correspond to critical points of suitable functionals. Furthermore, we observe that under standard monotonicity assumptions these critical points correspond to Nash equilibria of two-player infinite-dimensional differential games \eqref{eq:game}, \eqref{eq:gamestat}, \eqref{eq:gameLagrange}: Propositions \ref{prp:Nashnonzero}, \ref{prp:Nashstat}, \ref{prp:NashstatLagrange}.

Interestingly, when $H$ is of the form \eqref{eq:H_separable} we obtain zero-sum games, Corollaries \ref{crl:Nash_a=0_Hseparated}, \ref{crl:Nashstat_a=0_Hseparated}, \ref{crl:Nashstat_a=0_HseparatedLagrange}, that are directly linked to the optimal control formulations in \cite{LL07}. More precisely, in Section \ref{sec:previous} we observe that these zero-sum game formulations are the Hamiltonian forms of the optimal control problems in \cite{LL07}.

For stationary first-order systems with $H$ of the form \eqref{eq:H_congestion1} and parameter range $0<\alpha<1 \leq \gamma$ we again obtain a zero-sum game and thus find a saddle point formulation of \eqref{eq:mainstationary} that is new in the literature: Corollaries \ref{crl:Nashstat_congestion}, \ref{crl:NashstatLagrange_congestion}. Furthermore, in analogy with \cite{benbren'00}, we find a convex optimization formulation of this saddle point problem that is also new: Remark \ref{rmk:BBformcong_a<1}. Moreover, as we observe in Section \ref{sec:previous}, this convex optimization formulation is a generalization of the transformation in \cite{evafegonuvos'18} that was used to solve \eqref{eq:mainstationary} for $H$ of the form \eqref{eq:H_congestion1} and parameter range $0<\alpha<1\leq \gamma$ in the two-dimensional case.

For stationary first-order systems with $H$ of the form \eqref{eq:H_congestion1} and parameter range $1< \alpha,~1 \leq \gamma$ we recover a potential game instead of a zero-sum game, Corollaries \ref{crl:Nashstat_congestion_a>1}, \ref{crl:NashstatLagrange_congestion_a>1} and Remark \ref{rmk:potGame_a>1}. Furthermore, when $1< \alpha \leq \gamma$ the potential of this game is concave, and therefore Nash equilibria are the maximizers of this potential. Thus, in Section \ref{sec:previous}, we recover the variational principle in \cite{evafegonuvos'18}.

Additionally, we discuss some interpretations of our results in terms of the mean-field type or McKean-Vlasov control theory: Remarks \ref{rmk:socialcost}, \ref{rmk:Vlasov}, \ref{rmk:varstructure}.

We would like to stress that our discussion on differential-game formulations for \eqref{eq:main} and \eqref{eq:mainstationary} are formal; that is, we perform our analysis at the level of smooth functions. Accordingly, we do not address the existence and regularity theory of suitable weak solutions. These are extremely interesting and apparently challenging problems.

In the final section of this work, we propose an application of the differential-game formulation of \eqref{eq:main} to systems where the uniqueness condition \eqref{eq:Lionsuniq} is violated. We aim in particular at finding time-periodic solutions in the special case $H(x, p, m) = |p|^2/2 - f(m)$, where $f$ is a decreasing function. Previously, a similar result with completely different techniques was obtained in \cite{gosed'17} for a congestion model $H(x,p,m)=\frac{|p\pm \sqrt{2K}|^2}{2m}+K m,~K>0$.

Recently, an increasing interest has been devoted to the study of models where multiplicity of equilibria arises \cite{BarFis, CCes, c16, cpre, Go16,gosed'17,USG} (also in the multi-population setting, see e.g. \cite{ABC, CV, LW}). In a previous work \cite{CirO}, it has been proven the existence of solutions in simple models with separable Hamiltonians that exhibit an oscillatory behaviour on $[0, T]$. Such results have been obtained by means of bifurcation methods, looking in particular at branches of non-trivial solutions as the time horizon $T$ varies. The arise of periodic patterns led naturally to the question of existence of truly time-periodic solutions, namely a couple $m, u$ defined for all $t \in (-\infty, +\infty)$ and such that $m(\cdot, t + T) = m(\cdot, t)$ for some $T > 0$ and for all $t$. Here, we provide a positive answer to this question (see Theorem \ref{thm:bifu}), under the assumption that $-f'(1)$ is large enough.

The result again relies on bifurcation techniques, that in turn exploit an analysis of the linearized system similar to the one in \cite{CirO}. A crucial point is that such techniques require that solutions to the linearized system consist of a vector space of dimension one (or odd); here, we overcome this issue using the variational structure of \eqref{eq:main}, that enables us to implement bifurcation for potential operators (namely Theorem \ref{bifu}). These are not restricted to odd dimension of eigenspaces. We finally observe that the equilibria that we find have non-trivial dependance in time (see Remark \ref{eq:main}); this is remarkable in view of the autonomous nature of \eqref{eq:main}, where no periodic force acts explicitly.

The paper is organized as follows. In Section \ref{sec:notation} we introduce the notation and hypotheses. Section \ref{sec:dgame} is devoted to the two-player infinite-dimensional game formulations \eqref{eq:game}, \eqref{eq:gamestat}, \eqref{eq:gameLagrange} of \eqref{eq:main} and \eqref{eq:mainstationary} for a general Hamiltonian. Furthermore, in Section \ref{sec:regimes} we discuss particular Hamiltonians for which these games have more structure. Next, in Section \ref{sec:previous} we discuss the connections among \eqref{eq:game}, \eqref{eq:gamestat}, \eqref{eq:gameLagrange} and previously known variational principles. Finally, in Section \ref{sec:bifu} we prove the existence of time-periodic solutions for \eqref{eq:main} when \eqref{eq:Lionsuniq} is violated.

\section{Notation and assumptions}\label{sec:notation}

We denote by $\Tt^d,~\Rr^d,~\Rr_{+},~\Rr_{++}$, respectively, the $d$-dimensional flat torus, the $d$-dimensional Euclidean space, the set of nonnegative real numbers and the set of positive real numbers. For $T>0$ we denote by
\begin{equation*}
\Omega_T=\Tt^d \times [0,T].
\end{equation*}
For real positive and non-integer $\beta$, $C^{\beta, \beta/2}(\Omega_T)$ will be the standard H\"older parabolic space.

Throughout the note we assume that $H\in C^2(\Tt^d\times \Rr^d\times \Rr_{++})$ and $m^0,u^T \in C^2(\Tt^d),~m^0>0$. Furthermore, for the differential-game formulation of \eqref{eq:main} and \eqref{eq:mainstationary} we need the following monotonicity assumption.
\begin{hyp}
For all $(x,p,m) \in \Tt^d\times \Rr^d\times \Rr_{++}$ one has that
\begin{equation}\label{hyp:mon_convexity}
\nabla_{pp}^2 H(x,p,m) \geq 0,\quad \partial_m H(x,p,m) \leq 0.
\end{equation}
\end{hyp}
Note that \eqref{hyp:mon_convexity} is necessary for \eqref{eq:Lionsuniq}. Furthermore, we define $F_{H}$ by
\begin{equation}\label{eq:F_H}
F_H(x,p,m)=\int\limits^m H(x,p,z)dz,\quad (x,p,m)\in \Tt^d\times \Rr^d\times \Rr_{++}.
\end{equation}

For arbitrary functional $n \mapsto K(n)$ we denote by $\frac{\delta K}{\delta n}$ its variational derivative.

\section{A two-player infinite-dimensional differential game}\label{sec:dgame}

In this section, we show that under \eqref{hyp:mon_convexity} the smooth solutions of \eqref{eq:main} and \eqref{eq:mainstationary} are Nash equilibria of a suitable two-player infinite-dimensional  differential game. For $(m,u)\in C^2(\Omega_T),~m>0$ consider the following functionals:
\begin{equation}\label{eq:Psi1}
\begin{split}
\Psi_1(m,u)=&\int\limits_{\Omega_T} m(-u_t-\epsi \Delta u)+F_{H}(x,\nabla u,m)dxdt+\int\limits_{\Tt^d}m(x,T)u(x,T)dx\\
&-\int\limits_{\Tt^d} m^0(x)u(x,0)dx-\int\limits_{\Tt^d}m(x,T)u^T(x)dx\\
=&\int\limits_{\Omega_T} u(m_t-\epsi \Delta m)+F_{H}(x,\nabla u,m)dxdt+\int\limits_{\Tt^d}m(x,0)u(x,0)dx\\
&-\int\limits_{\Tt^d} m^0(x)u(x,0)dx-\int\limits_{\Tt^d}m(x,T)u^T(x)dx,
\end{split}
\end{equation}
and
\begin{equation}\label{eq:Psi2}
\begin{split}
\Psi_2(m,u)=&\int\limits_{\Omega_T} m(-u_t-\epsi \Delta u)+m H(x,\nabla u,m)dxdt+\int\limits_{\Tt^d}m(x,T)u(x,T)dx\\
&-\int\limits_{\Tt^d} m^0(x)u(x,0)dx-\int\limits_{\Tt^d}m(x,T)u^T(x)dx\\
=&\int\limits_{\Omega_T} u(m_t-\epsi \Delta m)+m H(x,\nabla u,m)dxdt+\int\limits_{\Tt^d}m(x,0)u(x,0)dx\\
&-\int\limits_{\Tt^d} m^0(x)u(x,0)dx-\int\limits_{\Tt^d}m(x,T)u^T(x)dx.
\end{split}
\end{equation}

Now, we consider the following differential game:
\begin{equation}\label{eq:game}
\begin{split}
\mbox{Player 1}&\qquad\qquad \sup\limits_{m\in C^2(\Omega_T),~m>0} \Psi_1(m,u)\\
&\\
\mbox{Player 2}&\qquad\qquad \inf\limits_{u\in C^2(\Omega_T)} \Psi_2(m,u).
\end{split}
\end{equation}
In \eqref{eq:game}, the first player chooses a strategy $m\in C^2(\Omega_T),~m>0$ and aims at maximizing $\Psi_1(m,u)$. The second player chooses a strategy $u\in C^2(\Omega_T)$ and aims at minimizing $\Psi_2(m,u)$. Note that $\Psi_1$ and $\Psi_2$ are different in general, and hence \eqref{eq:game} is a general-sum game.

\begin{proposition}\label{prp:Nashnonzero}
Suppose that \eqref{hyp:mon_convexity} holds. Then, for any $m^*,u^* \in C^2(\Omega_T)$ such that $m^*>0$ one has that
\begin{equation}\label{eq:Psi1ineq}
\Psi_1(m^*,u^*) \geq \Psi_1(m,u^*),~ \mbox{for all}~m\in C^2(\Omega_T),~m>0,
\end{equation}
and
\begin{equation}\label{eq:Psi2ineq}
\Psi_2(m^*,u^*) \leq \Psi_2(m^*,u),~ \mbox{for all}~u\in C^2(\Omega_T),
\end{equation}
if and only if $(m^*,u^*)$ is a classical solution of \eqref{eq:main}.
\end{proposition}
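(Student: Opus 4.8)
The plan is to reinterpret the two Nash inequalities as two separate optimization problems and to characterize their solutions through first-order (variational) conditions. Inequality \eqref{eq:Psi1ineq} says precisely that $m^*$ maximizes $m \mapsto \Psi_1(m, u^*)$ over the open set $\{m \in C^2(\Omega_T) : m > 0\}$, while \eqref{eq:Psi2ineq} says that $u^*$ minimizes $u \mapsto \Psi_2(m^*, u)$ over all of $C^2(\Omega_T)$. Since both are smooth functionals on (an open subset of) a linear space, I would first show that the vanishing of the respective variational derivatives is equivalent to the PDE system \eqref{eq:main}, and then exploit the convexity structure encoded in \eqref{hyp:mon_convexity} to upgrade ``critical point'' to ``global extremum'', thereby obtaining the equivalence in both directions.

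For the first player I would compute $\frac{\delta \Psi_1}{\delta m}(m^*, u^*)$ using the first representation in \eqref{eq:Psi1}, which is linear in $m$ apart from the term $F_H(x, \nabla u^*, m)$. Since $\partial_m F_H = H$ by \eqref{eq:F_H}, testing against an arbitrary $\phi \in C^2(\Omega_T)$ gives
\begin{equation*}
\frac{\delta \Psi_1}{\delta m}(m^*, u^*) \cdot \phi = \int_{\Omega_T} \bigl(-u^*_t - \epsi \Delta u^* + H(x, \nabla u^*, m^*)\bigr)\phi \, dx\, dt + \int_{\Tt^d} \bigl(u^*(x,T) - u^T(x)\bigr)\phi(x,T)\, dx.
\end{equation*}
Because $\phi$ need not vanish on $\{t = T\}$, requiring this to vanish for all $\phi$ is equivalent, by the fundamental lemma of the calculus of variations applied separately on $\Omega_T$ and on the slice $\{t = T\}$, to the Hamilton-Jacobi equation together with the terminal condition $u^*(\cdot, T) = u^T$.

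For the second player I would compute $\frac{\delta \Psi_2}{\delta u}(m^*, u^*)$ from the first representation in \eqref{eq:Psi2}, where the only nonlinearity in $u$ is $m^* H(x, \nabla u, m^*)$, whose variation produces $m^* \nabla_p H \cdot \nabla \psi$. Integrating by parts in $t$ (picking up slice terms at $t = 0$ and $t = T$) and in $x$ (no boundary on the torus) yields
\begin{equation*}
\frac{\delta \Psi_2}{\delta u}(m^*, u^*) \cdot \psi = \int_{\Omega_T} \bigl(m^*_t - \epsi \Delta m^* - \div(m^* \nabla_p H(x, \nabla u^*, m^*))\bigr)\psi \, dx\, dt + \int_{\Tt^d} \bigl(m^*(x,0) - m^0(x)\bigr)\psi(x,0)\, dx,
\end{equation*}
where the slice terms at $t = T$ cancel against the contribution of $\int_{\Tt^d} m(x,T)u(x,T)\,dx$. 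Allowing $\psi(\cdot, 0)$ to be arbitrary, the vanishing of this expression for all $\psi$ is equivalent to the Fokker-Planck equation together with the initial condition $m^*(\cdot, 0) = m^0$. Combined with the standing assumption $m^* > 0$, these two computations show that the pair of first-order conditions is exactly \eqref{eq:main}, which settles the ``only if'' direction: a Nash equilibrium is a critical point of each functional in its own variable, hence a classical solution.

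For the ``if'' direction I would invoke \eqref{hyp:mon_convexity}. The map $m \mapsto \Psi_1(m, u^*)$ is concave, since every term is linear in $m$ except $\int_{\Omega_T} F_H(x, \nabla u^*, m)$, and $\partial_{mm}^2 F_H = \partial_m H \leq 0$ makes $F_H$ concave in $m$; dually, $u \mapsto \Psi_2(m^*, u)$ is convex, since the only nonlinear term is $\int_{\Omega_T} m^* H(x, \nabla u, m^*)$, and $m^* > 0$ together with $\nabla_{pp}^2 H \geq 0$ makes $u \mapsto m^* H(x, \nabla u, m^*)$ convex. Consequently a critical point of $\Psi_1(\cdot, u^*)$ is automatically a global maximizer and a critical point of $\Psi_2(m^*, \cdot)$ a global minimizer, so a classical solution of \eqref{eq:main} satisfies both \eqref{eq:Psi1ineq} and \eqref{eq:Psi2ineq}. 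I expect the main obstacle to be the bookkeeping of the boundary terms in the two integrations by parts, namely ensuring that the slices at $t = 0$ and $t = T$ recombine to produce precisely the terminal condition for $u$ and the initial condition for $m$ rather than spurious constraints, together with the clean identification of which half of \eqref{hyp:mon_convexity} supplies concavity in $m$ and which supplies convexity in $u$, since it is exactly this convexity structure that makes the full equivalence, and not merely one implication, hold.
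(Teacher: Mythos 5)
Your proposal is correct and follows essentially the same route as the paper: compute the first variations of $\Psi_1$ in $m$ and of $\Psi_2$ in $u$ (including the slice terms at $t=T$ and $t=0$, which encode the terminal and initial conditions), identify the vanishing of these variations with \eqref{eq:main}, and use $\partial_m H\leq 0$ and $\nabla^2_{pp}H\geq 0$ to pass from critical point to global maximizer/minimizer. The only cosmetic difference is that the paper carries out the convexity argument along one-dimensional segments $h\mapsto \Psi_1(m^*+h\phi,u^*)$ and $h\mapsto\Psi_2(m^*,u^*+h\psi)$ rather than stating concavity/convexity of the functionals directly.
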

\begin{proof}
A straightforward calculation of the variational derivatives of $\Psi_1, \Psi_2$ yields that \eqref{eq:main} can be written as
\begin{equation*}
\begin{cases}
\frac{\delta \Psi_1}{\delta m}(m,u)=0,~m>0,\\
\frac{\delta \Psi_2}{\delta u}(m,u)=0.
\end{cases}
\end{equation*}
Then, the proof follows from the fact that $m\mapsto \Psi_1(m,u),~m>0$ is a concave functional, and $u\mapsto \Psi_2(m,u)$ is a convex functional when $m>0$. Although elementary, we present the proof here for the sake of completeness.

We start by the direct implication; that is, we assume that $(m^*,u^*),~m^*>0$ is a solution of \eqref{eq:main}, and we aim at proving \eqref{eq:Psi1ineq}, \eqref{eq:Psi2ineq}. Pick an arbitrary test function $m \in C^2(\Omega_T),~m>0$ and consider
\begin{equation}\label{eq:I1}
I_1(h)=\Psi_1(m^*+h\phi,u^*)=\Psi_1(m_h,u^*),
\end{equation}
where $\phi=m-m^*$. Note that $I_1$ is well defined in some neighborhood of $h=0$ that contains $h=1$ because $m,~m^*>0$. Then, we have that
\begin{equation*}
\begin{split}
\frac{dI_1(h)}{dh}=&\int\limits_{\Omega_T} \phi \left(-u^*_t-\epsi \Delta u^*+H\left(x,\nabla u^*,m_h\right)\right)dxdt+\int\limits_{\Tt^d}\phi(x,T)u^*(x,T)dx\\
&-\int\limits_{\Tt^d}\phi(x,T)u^T(x)dx\\
=&\int\limits_{\Omega_T} \phi \left(-u^*_t-\epsi \Delta u^*+H\left(x,\nabla u^*,m_h\right)\right)dxdt.
\end{split}
\end{equation*}
Therefore, we have that
\begin{equation*}
I'_1(0)=\int\limits_{\Omega_T} \phi \left(-u^*_t-\epsi \Delta u^*+ H\left(x,\nabla u^*,m^*\right)\right)dxdt=0,
\end{equation*}
because $(m^*,u^*)$ is a solution of \eqref{eq:main}. Furthermore, we have that
\begin{equation*}
\begin{split}
\frac{d^2I_1(h)}{dh^2}=\int\limits_{\Omega_T} \phi^2 \frac{\partial H\left(x,\nabla u^*,m\right)}{\partial m} \bigg|_{m_h}dxdt \leq 0,
\end{split}
\end{equation*}
by \eqref{hyp:mon_convexity}. Hence, $h\mapsto I_1(h)$ is a concave function, and its critical points are points of maxima. Therefore, $I_1(1) \leq I_1(0)$ which is exactly \eqref{eq:Psi1ineq}.

Next, we prove \eqref{eq:Psi2ineq}. For arbitrary $u \in C^2(\Omega_T)$ consider
\begin{equation}\label{eq:I2}
I_2(h)=\Psi_2(m^*,u^*+h\psi)=\Psi_2(m^*,u_h),
\end{equation}
where $\psi=u-u^*$. Then, we have that
\begin{equation*}
\begin{split}
\frac{dI_2(h)}{dh}=&\int\limits_{\Omega_T} \psi (m^*_t-\epsi \Delta m^*)+m^* \nabla_pH\left(x,\nabla u_h,m^*\right)\cdot \nabla \psi dxdt+\int\limits_{\Tt^d}m^*(x,0)\psi(x,0)dx\\
&-\int\limits_{\Tt^d}m^0(x)\psi(x,0)dx\\
=&\int\limits_{\Omega_T} \psi \left(m^*_t-\epsi \Delta m^*-\div \left(m^* \nabla_p H\left(x, \nabla u_h,m^*\right)\right)\right) dxdt.
\end{split}
\end{equation*}
Hence, we obtain that
\begin{equation*}
I_2'(0)=\int\limits_{\Omega_T} \psi \left(m^*_t-\epsi \Delta m^*-\div \left(m^* \nabla_pH\left(x,\nabla u^*,m^*\right)\right)\right) dxdt=0,
\end{equation*}
because $(m^*,u^*)$ is a solution of \eqref{eq:main}. Furthermore, we have that
\begin{equation*}
\begin{split}
\frac{d^2I_2(h)}{dh^2}=&\int\limits_{\Omega_T}  \nabla \psi\cdot \nabla_{pp}^2H\left(x,\nabla u_h,m^*\right)\nabla \psi dxdt \geq 0,
\end{split}
\end{equation*}
by \eqref{hyp:mon_convexity}. Hence, $h\mapsto I_2(h)$ is a convex function, and its critical points are minima. Thus, $I_2(1) \geq I_2(0)$ which yields \eqref{eq:Psi2ineq}.

Now, we turn to the inverse implication. Pick an arbitrary test function $\phi\in C^2(\Omega_T)$ and consider $I_1$ given by \eqref{eq:I1}. Then, $h\mapsto I_1(h)$ is defined in the neighborhood of $h=0$ because $m^*>0$. Since, $(m^*,u^*)$ satisfies \eqref{eq:Psi1ineq} we have that $I_1(h) \leq I_1(0)$ for $h \in \mathrm{dom}~I_1$. Hence, we have that $I_1'(0)=0$. From the previous calculations we have that
\begin{equation*}
\begin{split}
I_1'(0)=&\int\limits_{\Omega_T} \phi \left(-u^*_t-\epsi \Delta u^*+ H\left(x,\nabla u^*,m^*\right)\right)dxdt\\
&+\int\limits_{\Tt^d}\phi(x,T)u^*(x,T)dx-\int\limits_{\Tt^d}\phi(x,T)u^T(x)dx.
\end{split}
\end{equation*}
Since $\phi$ is arbitrary we obtain
\begin{equation*}
\begin{cases}
-u^*_t-\epsi \Delta u^*+ H\left(x,\nabla u^*,m^*\right)=0,\\
u^*(x,T)=u^T(x),~(x,t)\in \Omega_T.
\end{cases}
\end{equation*}
Furthermore, let $\psi\in C^2(\Omega_T)$ and consider $I_2$ given by \eqref{eq:I2}. Then, we have that $I_2(h)\geq I_2(0),~h\in \Rr$ by \eqref{eq:Psi2ineq}. Consequently, we obtain that $I_2'(0)=0$. But we have that
\begin{equation*}
\begin{split}
I_2'(0)=&\int\limits_{\Omega_T} \psi \left(m^*_t-\epsi \Delta m^*-\div \left(m^* \nabla_pH\left(x,\nabla u^*,m^*\right)\right)\right) dxdt\\
&+\int\limits_{\Tt^d}m^*(x,0)\psi(x,0)dx-\int\limits_{\Tt^d}m^0(x)\psi(x,0)dx=0.
\end{split}
\end{equation*}
Since $\psi$ is arbitrary, we obtain that
\begin{equation*}
\begin{cases}
m^*_t-\epsi \Delta m^*-\div \left(m^* \nabla_pH\left(x,\nabla u^*,m^*\right)\right)=0,\\
m^*(x,0)=m^0(x),~(x,t)\in \Omega_T,
\end{cases}
\end{equation*}
and the proof is complete.
\end{proof}
For the ergodic version of \eqref{eq:game} we introduce the payoff functionals
\begin{equation}\label{eq:Psi1stat}
\begin{split}
\hat{\Psi}_1(m,u)=&\int\limits_{\Tt^d} - \epsi m  \Delta u+F_{H}(x,\nabla u,m)dx\\
=&\int\limits_{\Tt^d} -\epsi u \Delta m+F_{H}(x,\nabla u,m)dx,
\end{split}
\end{equation}
and
\begin{equation}\label{eq:Psi2stat}
\begin{split}
\hat{\Psi}_2(m,u)=&\int\limits_{\Tt^d}  -\epsi m \Delta u+m H\left(x,\nabla u,m\right)dx\\
=&\int\limits_{\Tt^d} -\epsi u \Delta m+m H\left(x,\nabla u,m\right)dx,
\end{split}
\end{equation}
for $m,u \in C^2(\Tt^d),~m>0$. Then, we consider the following game:
\begin{equation}\label{eq:gamestat}
\begin{split}
\mbox{Player 1}&\qquad\qquad \sup\left\{\hat{\Psi}_1(m,u):~m \in C^2(\Tt^d),~m>0,~\int\limits_{\Tt^d}m=1\right\}\\
&\\
\mbox{Player 2}&\qquad\qquad \inf\left\{\hat{\Psi}_2(m,u):~u \in C^2(\Tt^d)\right\}.
\end{split}
\end{equation}

\begin{proposition}\label{prp:Nashstat}
Suppose that \eqref{hyp:mon_convexity} holds. Then, for any $m^*,u^* \in C^{2}(\Tt^d)$ such that $m^*>0$ and $\int\limits_{\Tt^d}m^*=1$ one has that
\begin{equation}\label{eq:Psi1ineqstat}
\hat{\Psi}_1(m^*,u^*) \geq \hat{\Psi}_1(m,u^*),~ \mbox{for all}~m\in C^{2}(\Tt^d),~m>0,~\int\limits_{\Tt^d}m=1.
\end{equation}
and
\begin{equation}\label{eq:Psi2ineqstat}
\hat{\Psi}_2(m^*,u^*) \leq \hat{\Psi}_2(m^*,u),~ \mbox{for all}~u\in C^{2}(\Tt^d),
\end{equation}
if and only if $(m^*,u^*)$ is a classical solution of \eqref{eq:mainstationary} for some $\Hh^* \in \Rr$.

Moreover, for both cases above one has that
\begin{equation}\label{eq:Hstarstat}
\Hh^*=\hat{\Psi}_2(m^*,u^*).
\end{equation}
\end{proposition}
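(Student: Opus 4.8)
The plan is to follow the proof of Proposition~\ref{prp:Nashnonzero} almost verbatim, the one genuinely new ingredient being the mass constraint $\int_{\Tt^d}m=1$ on Player~1, which will produce the ergodic constant $\Hh^*$ as a Lagrange multiplier. First I would record the two relevant first variations. Differentiating the first representation of $\hat{\Psi}_1$ in \eqref{eq:Psi1stat} in $m$ and using $\partial_m F_H(x,p,m)=H(x,p,m)$ gives
\[
\frac{\delta \hat{\Psi}_1}{\delta m}(m,u)=-\epsi\Delta u+H(x,\nabla u,m),
\]
while differentiating the second representation of $\hat{\Psi}_2$ in \eqref{eq:Psi2stat} in $u$ and integrating by parts on $\Tt^d$ (no boundary terms) gives
\[
\frac{\delta \hat{\Psi}_2}{\delta u}(m,u)=-\epsi\Delta m-\div\bigl(m\,\nabla_p H(x,\nabla u,m)\bigr).
\]
The corresponding second variations are $\int_{\Tt^d}\phi^2\,\partial_m H\,dx\le 0$ and $\int_{\Tt^d} m\,\nabla\psi\cdot\nabla_{pp}^2 H\,\nabla\psi\,dx\ge 0$, so under \eqref{hyp:mon_convexity} (and $m>0$) the map $m\mapsto\hat{\Psi}_1(m,u)$ is concave and $u\mapsto\hat{\Psi}_2(m,u)$ is convex, exactly as in the time-dependent case.

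For the direct implication I would assume that $(m^*,u^*)$ solves \eqref{eq:mainstationary} for some $\Hh^*$. To prove \eqref{eq:Psi1ineqstat}, fix an admissible competitor $m$, set $\phi=m-m^*$ so that $\int_{\Tt^d}\phi=0$, and put $I_1(h)=\hat{\Psi}_1(m^*+h\phi,u^*)$; this is well defined for $h$ in a neighborhood of $[0,1]$ since $m^*+h\phi=(1-h)m^*+hm>0$ there. Using the first equation of \eqref{eq:mainstationary},
\[
I_1'(0)=\int_{\Tt^d}\phi\,\bigl(-\epsi\Delta u^*+H(x,\nabla u^*,m^*)\bigr)\,dx=\Hh^*\int_{\Tt^d}\phi\,dx=0,
\]
and concavity of $I_1$ forces $I_1(1)\le I_1(0)$, which is \eqref{eq:Psi1ineqstat}. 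Inequality \eqref{eq:Psi2ineqstat} is obtained precisely as in Proposition~\ref{prp:Nashnonzero}: with $\psi=u-u^*$ and $I_2(h)=\hat{\Psi}_2(m^*,u^*+h\psi)$, the second equation of \eqref{eq:mainstationary} gives $I_2'(0)=0$, and convexity gives $I_2(1)\ge I_2(0)$.

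For the converse, assuming \eqref{eq:Psi1ineqstat}, I would note that for every $\phi\in C^2(\Tt^d)$ with $\int_{\Tt^d}\phi=0$ the perturbation $m^*+h\phi$ is admissible and positive for small $h$, so $h=0$ is an interior maximizer of $I_1$ and $I_1'(0)=0$; that is, $\int_{\Tt^d}\phi\,(-\epsi\Delta u^*+H(x,\nabla u^*,m^*))\,dx=0$ for all mean-zero $\phi$. This forces $-\epsi\Delta u^*+H(x,\nabla u^*,m^*)$ to be a constant, which I name $\Hh^*$, recovering the first equation of \eqref{eq:mainstationary}. Assuming \eqref{eq:Psi2ineqstat}, unconstrained minimality in $u$ gives $I_2'(0)=0$ for every $\psi\in C^2(\Tt^d)$, hence the second equation; together with the standing hypotheses $m^*>0$, $\int_{\Tt^d}m^*=1$ this shows $(m^*,u^*)$ solves \eqref{eq:mainstationary}.

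Finally, for \eqref{eq:Hstarstat} I would multiply the first equation of \eqref{eq:mainstationary} by $m^*$ and integrate; since $\int_{\Tt^d}m^*=1$,
\[
\Hh^*=\int_{\Tt^d} m^*\,\Hh^*\,dx=\int_{\Tt^d} \bigl(-\epsi m^*\Delta u^*+m^* H(x,\nabla u^*,m^*)\bigr)\,dx=\hat{\Psi}_2(m^*,u^*),
\]
the last equality being the first representation in \eqref{eq:Psi2stat}. The only real obstacle, modest as it is, lies in the converse: one must argue that testing solely against mean-zero $\phi$ yields constancy of $\delta\hat{\Psi}_1/\delta m$ rather than its vanishing, which is exactly where the multiplier $\Hh^*$ enters; everything else is a transcription of the time-dependent argument with the boundary-in-time terms removed.
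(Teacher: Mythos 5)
Your proposal is correct and follows exactly the route the paper intends: the paper's own proof of Proposition \ref{prp:Nashstat} is simply the statement that it is analogous to Proposition \ref{prp:Nashnonzero}, and you carry out that adaptation faithfully, including the one genuinely new point (restricting to mean-zero perturbations $\phi$, which yields constancy rather than vanishing of $\delta\hat{\Psi}_1/\delta m$ and produces $\Hh^*$ as the Lagrange multiplier, with \eqref{eq:Hstarstat} following by multiplying the Hamilton--Jacobi equation by $m^*$ and integrating). No gaps.
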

\begin{proof}
The proof is analogous to the one of Proposition \ref{prp:Nashnonzero}.
\end{proof}

\begin{remark}\label{rmk:socialcost}
As we pointed out in the Introduction, \eqref{eq:main} and \eqref{eq:mainstationary} are Nash equilibrium conditions in a differential game with infinitely many agents that interact through the distribution of the whole population, the mean-field. Interestingly, $\Psi_2$ and $\hat{\Psi}_2$ can be interpreted in terms of the average payoff of the population. More precisely, for a given control $r:\Omega_T\mapsto \Rr^d$ consider the functional
\begin{equation}\label{eq:Scost}
\mathcal{S}(r)= \int\limits_{\Omega_T} L\big(x,-r(x,s),m(x,s)\big) m(x,s)dxds+\int\limits_{\Tt^d} u^T(x) m(x,T) dx,
\end{equation}
where $L(x,v,m)=\sup\limits_{p} v\cdot p - H(x,p,m)$, and the dynamics is given by
\begin{equation}\label{eq:rcontrol}
\begin{cases}
m_t-\epsi \Delta m+\div (m r)=0,\\
m(x,0)=m^0(x),~(x,t) \in \Omega_T.
\end{cases}
\end{equation}
The mean-field type or McKean-Vlasov optimal control problem reduces to
\begin{equation}\label{eq:Vlasovcontrol}
\inf \{\mathcal{S}(r)~\mbox{s.t.}~\eqref{eq:rcontrol}~\mbox{holds}\}.
\end{equation}
Here, $\mathcal{S}(r)$ is interpreted as average payoff or the social cost of the population when control $r$ is applied. Thus, \eqref{eq:Vlasovcontrol} is an optimal control problem for a central planner that aims at minimizing the average cost per agent. We refer to \cite{cardela'18} for a detailed discussion on McKean-Vlasov optimal control problems and their relation to MFG systems.

Suppose that $m,u\in C^2(\Omega_T),~m>0$ are such that $m(x,0)=m^0(x),u(x,T)=u^T(x),~x\in \Tt^d$, and
\begin{equation}\label{eq:FP}
m_t-\epsi \Delta m-\div (m \nabla_p H(x,\nabla u,m))=0,~(x,t) \in \Omega_T.
\end{equation}
Then, $t\mapsto m(\cdot,t)$ evolves according to \eqref{eq:rcontrol}, where the control $r$ is given by
\begin{equation}\label{eq:roptimal}
r_{m,u}(x,t)=-\nabla_p H(x,\nabla u(x,t),m(x,t)),~(x,t)\in \Omega_T. 
\end{equation}
Thus, using the identity
\begin{equation*}
L(x,\nabla_p H(x,p,m),m )=p\cdot \nabla_p H(x,p,m)-H(x,p,m)
\end{equation*}
we obtain that
\begin{equation}\label{eq:SPsi2}
\begin{split}
\mathcal{S}(r_{m,u})=& \int\limits_{\Omega_T} L\big(x,-r_{m,u}(x,s),m(x,s)\big) m(x,s)dxds+\int\limits_{\Tt^d} u^T(x) m(x,T) dx\\
=& \int\limits_{\Omega_T}  \big(\nabla u \cdot \nabla_p H(x,\nabla u ,m)-H(x,\nabla u,m)\big) m dxds+\int\limits_{\Tt^d} u^T(x) m(x,T) dx\\
=& \int\limits_{\Omega_T}  - u \cdot \mathrm{div} \left(m \nabla_p H(x,\nabla u ,m)\right)-H(x,\nabla u,m)  m dxds+\int\limits_{\Tt^d} u^T(x) m(x,T) dx\\
=& \int\limits_{\Omega_T}   - u (m_t-\epsi \Delta m)-H(x,\nabla u,m)  m dxds+\int\limits_{\Tt^d} u^T(x) m(x,T) dx\\
=&-\Psi_2(m,u).
\end{split}
\end{equation}
Now, suppose that $(m^*,u^*) \in C^2(\Omega_T),~m^*>0$ is a Nash equilibrium of \eqref{eq:game}. Then, by Proposition \ref{prp:Nashnonzero} $(m^*,u^*)$ is a classical solution of \eqref{eq:main}, and hence \eqref{eq:FP} is valid for $(m,u)=(m^*,u^*)$. Therefore, from \eqref{eq:SPsi2} one has that the social cost corresponding to the MFG equilibrium is equal to $-\Psi_2(m^*,u^*)$; that is, the negative of the Player 2 value in \eqref{eq:game}.

Similarly, from Proposition \ref{prp:Nashstat} we have that if $(m^*,u^*) \in C^2(\Tt^d),~m^*>0$ is a Nash equilibrium in \eqref{eq:gamestat}, then $(m^*,u^*,\Hh^*)$ is a classical solution of \eqref{eq:mainstationary} for some $\Hh^* \in \Rr$, and  $-\hat{\Psi}_2(m^*,u^*)$ is the ergodic social cost. But we have that $-\hat{\Psi}_2(m^*,u^*)=-\Hh^*$. Hence, $-\Hh^*$ in \eqref{eq:mainstationary} can be interpreted as an ergodic social cost.
\end{remark}
\begin{remark}\label{rmk:Vlasov}
The PDE system corresponding to the optimality conditions of \eqref{eq:Vlasovcontrol} is given by
\begin{equation}\label{eq:Vmain}
\begin{cases}
-u_t-\epsi \Delta u + H\left(x, \nabla u,m\right)+m \partial_m H\left(x, \nabla u,m\right)=0,\\
m_t-\epsi \Delta m - \mathrm{div}  \left(m \nabla_p H\left(x,\nabla u,m\right)\right)=0,\\
m> 0,\ m(x,0) = m^0(x),\ u(x,T)=u^T(x),~(x,t)\in \Tt^d \times [0,T].
\end{cases}
\end{equation}
Suppose that $(\hat{m},\hat{u})\in C^2(\Omega_T)$ is a solution of this system. Then $r_{\hat{m},\hat{u}}$ given by \eqref{eq:roptimal} is a solution of \eqref{eq:Vlasovcontrol} \cite{befreyam'13,cardela'18}. Thus, if $(m^*,u^*)\in C^2(\Omega_T)$ is a solution of \eqref{eq:main}, and $r_{m^*,u^*}$ is the corresponding control given by \eqref{eq:roptimal}, then
\begin{equation*}
\mathcal{S}(r_{\hat{m},\hat{u}}) \leq \mathcal{S}(r_{m^*,u^*}).
\end{equation*}
Furthermore, using \eqref{eq:SPsi2} we get that
\begin{equation}\label{eq:Nashsuboptimal}
\Psi_2(m^*,u^*) \leq \Psi_2 (\hat{m},\hat{u}).
\end{equation}
Interestingly, \eqref{eq:Nashsuboptimal} can be obtained in a simple and direct manner without going through McKean-Vlasov or PDE optimal control approach. Indeed, one can check that smooth solutions of \eqref{eq:Vmain} are the critical points of the functional $\Psi_2$. Hence, similar to Proposition \ref{prp:Nashnonzero}, if $p \mapsto H(x,p,m)$ is convex and $m \mapsto m H(x,p,m)$ is concave, then smooth solutions of \eqref{eq:Vmain} are Nash equilibria of a two-player zero-sum game
\begin{equation}\label{eq:Vgame}
\begin{split}
\mbox{Player 1}&\qquad\qquad \sup\limits_{m\in C^2(\Omega_T),~m>0} \Psi_2(m,u)\\
&\\
\mbox{Player 2}&\qquad\qquad \inf\limits_{u\in C^2(\Omega_T)} \Psi_2(m,u).
\end{split}
\end{equation}
Now, suppose that $(m^*,u^*)$ and $(\hat{m},\hat{u})$ are Nash equilibria for \eqref{eq:game} and \eqref{eq:Vgame} respectively. Then, $\hat{u}$ is a suboptimal strategy for Player 2 in \eqref{eq:game} so
\begin{equation*}
\Psi_2(m^*,u^*) \leq \Psi_2(m^*,\hat{u}).
\end{equation*}
Furthermore, $m^*$ is a suboptimal strategy for Player 1 in \eqref{eq:Vgame} so
\begin{equation*}
\Psi_2(m^*,\hat{u}) \leq \Psi_2(\hat{m},\hat{u}),
\end{equation*}
and we arrive at \eqref{eq:Nashsuboptimal}.
\end{remark}

\begin{remark}\label{rmk:varstructure}
In contrast with \eqref{eq:main} and \eqref{eq:mainstationary}, system \eqref{eq:Vmain} always admits a variational formulation \cite{AL16}. One way to explain this phenomenon is that \eqref{eq:main} and \eqref{eq:mainstationary} correspond to Nash equilibria of a nonzero-sum game and \eqref{eq:Vmain} corresponds to a zero-sum game. Moreover, when $H$ is separable \eqref{eq:H_separable} we observe in Corollaries \ref{crl:Nash_a=0_Hseparated}, \ref{crl:Nashstat_a=0_Hseparated} that games \eqref{eq:game}, \eqref{eq:gamestat} are equivalent to zero-sum games \eqref{eq:gameseparable}, \eqref{eq:gameseparablestat}. Thus, it is expected that \eqref{eq:main} and \eqref{eq:mainstationary} admit variational formulations for separable $H$. This is indeed the case as observed in \cite{LL07}. 
\end{remark}

In Proposition \ref{prp:Nashstat}, the first player faces an optimization problem with constraint $\int_{\Tt^d} m=1$ that yields a Lagrange multiplier $\Hh^*$. Moreover, $\Hh^*$ is the value of the second player in the equilibrium. In fact, one can incorporate $\Hh^*$ in the differential game. For that, we define
\begin{equation}\label{eq:Psi12statLagrange}
\begin{split}
\tilde{\Psi}_1(m,u,\Hh)=&\int\limits_{\Tt^d} - \epsi m  \Delta u+F_{H}(x,\nabla u,m)+\Hh(1-m)dx,\\
\tilde{\Psi}_2(m,u,\Hh)=&\int\limits_{\Tt^d}  -\epsi m \Delta u+m H\left(x,\nabla u,m\right)+\Hh(1-m)dx,
\end{split}
\end{equation}
for $m,u \in C^2(\Tt^d),~m>0,~\Hh\in \Rr$ and consider the game
\begin{equation}\label{eq:gameLagrange}
\begin{split}
\mbox{Player 1}&\qquad\qquad \sup\limits_{m\in C^2(\Tt^d),~m>0} \tilde{\Psi}_1(m,u,\Hh)\\
&\\
\mbox{Player 2}&\qquad\qquad \inf\limits_{u\in C^2(\Tt^d),~\Hh\in \Rr} \tilde{\Psi}_2(m,u,\Hh).
\end{split}
\end{equation}
Then, we have the following proposition.
\begin{proposition}\label{prp:NashstatLagrange}
Suppose that \eqref{hyp:mon_convexity} holds. Then, for any $m^*,u^* \in C^{2}(\Tt^d),~\Hh^*\in \Rr$ such that $m^*>0$ one has that
\begin{equation}\label{eq:Psi1ineqstatLagrange}
\tilde{\Psi}_1(m^*,u^*,\Hh^*) \geq \tilde{\Psi}_1(m,u^*,\Hh^*),~ \mbox{for all}~m\in C^{2}(\Tt^d),~m>0.
\end{equation}
and
\begin{equation}\label{eq:Psi2ineqstatLagrange}
\tilde{\Psi}_2(m^*,u^*,\Hh^*) \leq \tilde{\Psi}_2(m^*,u,\Hh),~ \mbox{for all}~(u,\Hh)\in C^{2}(\Tt^d)\times \Rr,
\end{equation}
if and only if $(m^*,u^*,\Hh^*)$ is a classical solution of \eqref{eq:mainstationary}. 
\end{proposition}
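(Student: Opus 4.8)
The plan is to follow the template of Proposition \ref{prp:Nashnonzero}, reducing the saddle-point inequalities \eqref{eq:Psi1ineqstatLagrange}, \eqref{eq:Psi2ineqstatLagrange} to a pair of first-order optimality conditions together with the appropriate concavity and convexity. First I would compute the relevant variational derivatives. Since $\partial_m F_H(x,p,m)=H(x,p,m)$ by \eqref{eq:F_H}, one finds
\begin{equation*}
\frac{\delta \tilde{\Psi}_1}{\delta m}(m,u,\Hh)=-\epsi \Delta u + H(x,\nabla u,m)-\Hh,
\end{equation*}
while integrating by parts (using the second expression for $-\epsi m \Delta u$ in \eqref{eq:Psi12statLagrange}) gives
\begin{equation*}
\frac{\delta \tilde{\Psi}_2}{\delta u}(m,u,\Hh)=-\epsi \Delta m-\div\left(m\,\nabla_p H(x,\nabla u,m)\right),\qquad \frac{\delta \tilde{\Psi}_2}{\delta \Hh}(m,u,\Hh)=\int\limits_{\Tt^d}(1-m)dx.
\end{equation*}
Setting these three derivatives to zero reproduces exactly the Hamilton--Jacobi equation, the Fokker--Planck equation, and the mass constraint $\int_{\Tt^d}m=1$ of \eqref{eq:mainstationary}, with $\Hh$ playing the role of the ergodic constant $\bar H$. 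This is the crucial structural fact; the remainder is convexity bookkeeping.

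For the direct implication I would assume $(m^*,u^*,\Hh^*)$ solves \eqref{eq:mainstationary} and, exactly as in Proposition \ref{prp:Nashnonzero}, set $I_1(h)=\tilde{\Psi}_1(m^*+h\phi,u^*,\Hh^*)$ with $\phi=m-m^*$, and $I_2(h)=\tilde{\Psi}_2(m^*,u^*+h\psi,\Hh^*+h\lambda)$ with $\psi=u-u^*$ and $\lambda=\Hh-\Hh^*$. The derivative computations above give $I_1'(0)=I_2'(0)=0$. For the second variations, the terms $-\epsi m \Delta u$ and $\Hh^*(1-m)$ are affine in $m$ and drop out, leaving
\begin{equation*}
I_1''(h)=\int\limits_{\Tt^d}\phi^2\,\partial_m H(x,\nabla u^*,m_h)dx\leq 0
\end{equation*}
by \eqref{hyp:mon_convexity}; similarly the term $\Hh(1-m^*)$ is affine in $\Hh$ and contributes nothing, so that
\begin{equation*}
I_2''(h)=\int\limits_{\Tt^d}m^*\,\nabla\psi\cdot\nabla_{pp}^2 H(x,\nabla u_h,m^*)\nabla\psi\,dx\geq 0
\end{equation*}
by \eqref{hyp:mon_convexity} together with $m^*>0$. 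Hence $I_1$ is concave and $I_2$ is convex, so $I_1(1)\leq I_1(0)$ and $I_2(1)\geq I_2(0)$, which are precisely \eqref{eq:Psi1ineqstatLagrange} and \eqref{eq:Psi2ineqstatLagrange}.

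For the converse I would, as before, read off $I_1'(0)=0$ and $I_2'(0)=0$ from the respective inequalities, noting that $m^*>0$ on the compact torus guarantees $m^*+h\phi>0$ for $h$ in a neighborhood of $0$, so that $I_1$ is defined near $h=0$ for every test function $\phi$. Since Player~1 now optimizes over all positive $m$ with no mass constraint, $\phi\in C^2(\Tt^d)$ is entirely arbitrary, and $I_1'(0)=0$ yields the Hamilton--Jacobi equation $-\epsi\Delta u^*+H(x,\nabla u^*,m^*)=\Hh^*$ pointwise; independent variations $\psi\in C^2(\Tt^d)$ and $\lambda\in\Rr$ in $I_2'(0)=0$ then deliver the Fokker--Planck equation and the constraint $\int_{\Tt^d}m^*=1$. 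The only point needing slight care relative to Proposition \ref{prp:Nashstat} is the joint convexity of $(u,\Hh)\mapsto\tilde{\Psi}_2(m^*,u,\Hh)$; but since this map is the sum of a term convex in $u$ and a term affine in $\Hh$ with no mixed $u$--$\Hh$ dependence, joint convexity is immediate, and the one-parameter argument along the segment $h\mapsto(u^*+h\psi,\Hh^*+h\lambda)$ suffices. Consequently no genuinely new obstacle arises beyond those already handled in Propositions \ref{prp:Nashnonzero} and \ref{prp:Nashstat}, the essential gain being that relaxing the constraint into the objective via $\Hh(1-m)$ removes the mean-zero restriction on $\phi$ and lets the HJ equation be recovered directly.
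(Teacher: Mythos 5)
Your proposal is correct and follows exactly the route the paper intends: the paper's own proof consists of the single remark that the argument is analogous to Proposition \ref{prp:Nashnonzero}, with the key points being the concavity of $m \mapsto \tilde{\Psi}_1(m,u^*,\Hh^*)$ and the joint convexity of $(u,\Hh)\mapsto \tilde{\Psi}_2(m^*,u,\Hh)$, and your write-up supplies precisely that analogous argument, correctly handling the two genuinely new features (the affine $\Hh(1-m)$ term and the removal of the mass constraint on Player~1's variations).
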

\begin{proof}
The proof is analogous to the one of Proposition \ref{prp:Nashnonzero}. The key point is that the $m \mapsto \tilde{\Psi}_1(m,u^*,\Hh^*)$ is a concave functional, and $(u,\Hh) \mapsto \tilde{\Psi}_2(m^*,u,\Hh)$ is a convex functional.
\end{proof}

\section{Various regimes of the game}\label{sec:regimes}

In this section, we consider several types of $H$ for which \eqref{eq:game} and \eqref{eq:gamestat} can be simplified. In particular, we consider separable and power-like Hamiltonians with congestion.  

\subsection{Separable Hamiltonian}

In this section, we assume that $H$ is of the form \eqref{eq:H_separable} for some $H_0 \in C^2(\Tt^d \times \Rr^d)$ and $f\in C^1(\Tt^d\times \Rr_{++})$. In this case, \eqref{eq:main} and \eqref{eq:mainstationary} respectively become
\begin{equation}\label{eq:main_a=0_Hseparated}
\begin{cases}
-u_t-\epsi \Delta u + H_0\left(x, \nabla u\right)=f(x,m),\\
m_t-\epsi \Delta m - \mathrm{div}  \left(m \nabla_p H_0\left(x,\nabla u\right)\right)=0,\\
m> 0,\ m(x,0) = m^0(x),\ u(x,T)=u^T(x),
\end{cases}
\end{equation}
and
\begin{equation}\label{eq:mainstat_a=0_Hseparated}
\begin{cases}
-\epsi \Delta u + H_0(x, \nabla u)=f(x,m)+\bar{H},\\
-\epsi \Delta m - \mathrm{div}  \left(m \nabla_p H_0(x,\nabla u)\right)=0,\\
m> 0,\ \int\limits_{\Tt^d} m(x)dx=1.
\end{cases}
\end{equation}
The assumption \eqref{hyp:mon_convexity} in this case is equivalent to the following one.
\begin{hyp}
For all $(x,p,m) \in \Tt^d\times \Rr^d \times \Rr_{++}$ one has that
\begin{equation}\label{hyp:mon_convexity_a=0_Hseparated}
\nabla_{pp}^2 H_0(x,p) \geq 0,\quad \partial_m f(x,m) \geq 0.
\end{equation}
\end{hyp}
Furthermore, $F_{H}$ in \eqref{eq:F_H} is given by
\begin{equation*}
F_H(x,p,m)=m H_0(x,p) - F(x,m),\quad (x,p,m)\in \Tt^d \times \Rr^d \times \Rr_{++}.
\end{equation*}
where $F(x,m)=\int\limits^{m} f(x,z)dz$. Hence, $\Psi_1$ in \eqref{eq:Psi1} has the form
\begin{equation}\label{eq:Psi1_a=0_Hseparated}
\begin{split}
\Psi_1(m,u)=&\int\limits_{\Omega_T} m(-u_t-\epsi \Delta u)+m H_0(x,\nabla u)- F(x,m)dxdt+\int\limits_{\Tt^d}m(x,T)u(x,T)dx\\
&-\int\limits_{\Tt^d} m^0(x)u(x,0)dx-\int\limits_{\Tt^d}m(x,T)u^T(x)dx\\
=&\int\limits_{\Omega_T} u(m_t-\epsi \Delta m)+m H_0(x,\nabla u)- F(x,m)dxdt+\int\limits_{\Tt^d}m(x,0)u(x,0)dx\\
&-\int\limits_{\Tt^d} m^0(x)u(x,0)dx-\int\limits_{\Tt^d}m(x,T)u^T(x)dx.
\end{split}
\end{equation}
Furthermore, $\Psi_2$ in \eqref{eq:Psi2} has the form
\begin{equation}\label{eq:Psi2_a=0_Hseparated}
\begin{split}
\Psi_2(m,u)=&\int\limits_{\Omega_T} m(-u_t-\epsi \Delta u)+m H_0\left(x,\nabla u\right)-mf(x,m)dxdt+\int\limits_{\Tt^d}m(x,T)u(x,T)dx\\
&-\int\limits_{\Tt^d} m^0(x)u(x,0)dx-\int\limits_{\Tt^d}m(x,T)u^T(x)dx\\
=&\int\limits_{\Omega_T} u(m_t-\epsi \Delta m)+ m H_0\left(x,\nabla u\right)-mf(x,m) dxdt+\int\limits_{\Tt^d}m(x,0)u(x,0)dx\\
&-\int\limits_{\Tt^d} m^0(x)u(x,0)dx-\int\limits_{\Tt^d}m(x,T)u^T(x)dx.
\end{split}
\end{equation}

Proposition \ref{prp:Nashnonzero} asserts that an optimal $u^*$ is a minimizer of $u\mapsto\Psi_2(m^*,u)$. Therefore, if we modify $\Psi_2(m,u)$ by adding a functional that depends only on $m$ the minimization problem in $u$ will not change. From \eqref{eq:Psi1_a=0_Hseparated} and \eqref{eq:Psi2_a=0_Hseparated} we observe that for $H$ as in \eqref{eq:H_separable} the functional $\Psi_2$ differs from $\Psi_1$ only by an $m$-dependent term $\int -m f(x,m) + F(x,m)$. Thus, in this case, Proposition \ref{prp:Nashnonzero} is valid with $\Psi_2$ replaced by $\Psi_1$.
\begin{corollary}\label{crl:Nash_a=0_Hseparated}
Suppose that \eqref{hyp:mon_convexity_a=0_Hseparated} holds. Furthermore, let $\Psi_1$ be given by \eqref{eq:Psi1_a=0_Hseparated}. Then, for any $m^*,u^* \in C^2(\Omega_T)$ such that $m^*>0$ one has that
\begin{equation}\label{eq:Psi1ineq_a=0_Hseparated}
\Psi_1(m^*,u^*) \geq \Psi_1(m,u^*),~ \mbox{for all}~m\in C^2(\Omega_T),
\end{equation}
and
\begin{equation}\label{eq:Psi2ineq_a=0_Hseparated}
\Psi_1(m^*,u^*) \leq \Psi_1(m^*,u),~ \mbox{for all}~u\in C^2(\Omega_T),
\end{equation}
if and only if $(m^*,u^*)$ is a classical solution of \eqref{eq:main_a=0_Hseparated}.
\end{corollary}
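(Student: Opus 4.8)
The plan is to reduce the statement entirely to Proposition \ref{prp:Nashnonzero} by exploiting the additive structure that separability confers on the two payoffs. First I would record the elementary observation already flagged in the text preceding the statement: comparing \eqref{eq:Psi1_a=0_Hseparated} with \eqref{eq:Psi2_a=0_Hseparated}, the boundary terms and the $\int_{\Omega_T} m(-u_t-\epsi\Delta u)+mH_0(x,\nabla u)$ contributions coincide, so that
\begin{equation*}
\Psi_1(m,u)-\Psi_2(m,u)=\int_{\Omega_T}\big(mf(x,m)-F(x,m)\big)\,dx\,dt=:G(m),
\end{equation*}
a functional of $m$ alone. I would also invoke the equivalence of \eqref{hyp:mon_convexity} and \eqref{hyp:mon_convexity_a=0_Hseparated} noted above, which places us squarely within the hypotheses of Proposition \ref{prp:Nashnonzero}.

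Next I would dispatch the two inequalities separately. Since \eqref{eq:Psi1ineq_a=0_Hseparated} uses $\Psi_1$ in both arguments, it is verbatim the first Nash inequality \eqref{eq:Psi1ineq} of Proposition \ref{prp:Nashnonzero}. For the second inequality I would freeze $m=m^*$ and write $\Psi_1(m^*,u)=\Psi_2(m^*,u)+G(m^*)$; as $G(m^*)$ is a constant in $u$, minimizing $u\mapsto\Psi_1(m^*,u)$ is the same problem as minimizing $u\mapsto\Psi_2(m^*,u)$, so \eqref{eq:Psi2ineq_a=0_Hseparated} holds for all $u$ if and only if the second Nash inequality \eqref{eq:Psi2ineq} does. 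The pair \eqref{eq:Psi1ineq_a=0_Hseparated}--\eqref{eq:Psi2ineq_a=0_Hseparated} is thus equivalent to the pair \eqref{eq:Psi1ineq}--\eqref{eq:Psi2ineq}, which Proposition \ref{prp:Nashnonzero} characterizes as the classical solutions of \eqref{eq:main}; under \eqref{eq:H_separable} the latter is precisely \eqref{eq:main_a=0_Hseparated}, and this closes the argument.

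The proof is otherwise routine, and the one point I would flag is the apparent dropping of the constraint $m>0$ in \eqref{eq:Psi1ineq_a=0_Hseparated} relative to \eqref{eq:Psi1ineq}. Because $\Psi_1$ in the separable case still carries the term $F(x,m)$ with $f\in C^1(\Tt^d\times\Rr_{++})$, strictly $\Psi_1(m,u^*)$ is defined only for $m>0$, so the inequality is most naturally read over $\{m>0\}$ and then coincides with \eqref{eq:Psi1ineq}, requiring no separate work. If one instead extends $F(x,\cdot)$ to all of $\Rr$, then the scalar map $h\mapsto I_1(h)=\Psi_1(m^*+h\phi,u^*)$ with $\phi=m-m^*$, taken from the proof of Proposition \ref{prp:Nashnonzero}, stays well defined with second derivative $I_1''(h)=\int_{\Omega_T}\phi^2\,\partial_m H(x,\nabla u^*,m^*+h\phi)\,dx\,dt\le 0$ by \eqref{hyp:mon_convexity_a=0_Hseparated}; global concavity of $I_1$ then gives $I_1(1)\le I_1(0)$ for every $m\in C^2(\Omega_T)$, extending the maximization inequality without the sign restriction. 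Either reading suffices.
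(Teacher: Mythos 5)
Your argument is correct and is essentially the paper's own: the authors likewise observe that $\Psi_2-\Psi_1=\int_{\Omega_T}\left(F(x,m)-mf(x,m)\right)dxdt$ depends only on $m$, so the minimization in $u$ is unchanged and Proposition \ref{prp:Nashnonzero} applies verbatim with $\Psi_2$ replaced by $\Psi_1$. Your additional remark on reading \eqref{eq:Psi1ineq_a=0_Hseparated} over $\{m>0\}$ is a sensible clarification of a slight imprecision in the statement, but it does not change the route.
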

This previous corollary asserts that for $H$ as in \eqref{eq:H_separable} the smooth solutions of \eqref{eq:main} are Nash equilibria of the infinite-dimensional zero-sum game
\begin{equation}\label{eq:gameseparable}
\begin{split}
\mbox{Player 1}&\qquad\qquad \sup\limits_{m\in C^2(\Omega_T),~m>0} \Psi_1(m,u)\\
&\\
\mbox{Player 2}&\qquad\qquad \inf\limits_{u\in C^2(\Omega_T)} \Psi_1(m,u).
\end{split}
\end{equation}

Furthermore, for $H$ of the form \eqref{eq:H_separable} we have that $\hat{\Psi}_1,~\tilde{\Psi}_1$ from \eqref{eq:Psi1stat} and \eqref{eq:Psi12statLagrange} take the form
\begin{equation}\label{eq:Psi1stat_a=0_Hseparated}
\begin{split}
\hat{\Psi}_1(m,u)=&\int\limits_{\Tt^d} - \epsi m  \Delta u+m H_0(x,\nabla u)-F(x,m)dx,\\
\tilde{\Psi}_1(m,u,\Hh)=&\int\limits_{\Tt^d}  -\epsi m \Delta u+m H_0(x,\nabla u)-F(x,m)+\Hh(1-m)dx,
\end{split}
\end{equation}
for $m,u \in C^2(\Tt^d),~m>0,~\Hh\in \Rr$. Then, the following corollaries of Propositions \ref{prp:Nashstat} and \ref{prp:NashstatLagrange} hold.
\begin{corollary}\label{crl:Nashstat_a=0_Hseparated}
Suppose that \eqref{hyp:mon_convexity_a=0_Hseparated} holds. Furthermore, let $\hat{\Psi}_1$ be given by \eqref{eq:Psi1stat_a=0_Hseparated}. Then, for any $m^*,u^* \in C^{2}(\Tt^d)$ such that $m^*>0$ one has that
\begin{equation}\label{eq:Psi1ineqstat_a=0_Hseparated}
\hat{\Psi}_1(m^*,u^*) \geq \hat{\Psi}_1(m,u^*),~ \mbox{for all}~m\in C^{2}(\Tt^d),~m>0,~\int\limits_{\Tt^d}m=1.
\end{equation}
and
\begin{equation}\label{eq:Psi2ineqstat_a=0_Hseparated}
\hat{\Psi}_1(m^*,u^*) \leq \hat{\Psi}_1(m^*,u),~ \mbox{for all}~u\in C^{2}(\Tt^d),
\end{equation}
if and only if $(m^*,u^*)$ is a classical solution of \eqref{eq:mainstat_a=0_Hseparated} for some $\Hh^* \in \Rr$.

Moreover, for both cases above one has that
\begin{equation}\label{eq:Hstarstatseparable}
\begin{split}
\Hh^*=&\hat{\Psi}_2(m^*,u^*)=\hat{\Psi}_1(m^*,u^*)-\int\limits_{\Tt^d} m^* f(x,m^*)- F(x,m^*)dx\\
=&\hat{\Psi}_1(m^*,u^*)-\int\limits_{\Tt^d} F^*(x,f(x,m^*))dx,
\end{split}
\end{equation}
where $F^*(x,w)=\sup\limits_{z\in \Rr_{+}} w z - F(x,z),~w\in \Rr$, is the convex conjugate of $F$. 
\end{corollary}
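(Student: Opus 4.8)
The corollary has two parts. First, it claims that Corollary \ref{crl:Nash_a=0_Hseparated} (a zero-sum game characterization via $\hat{\Psi}_1$) holds in the stationary separable case: the saddle-point inequalities \eqref{eq:Psi1ineqstat_a=0_Hseparated}, \eqref{eq:Psi2ineqstat_a=0_Hseparated} for $\hat{\Psi}_1$ are equivalent to $(m^*,u^*)$ solving \eqref{eq:mainstat_a=0_Hseparated}. Second, it gives the explicit formula \eqref{eq:Hstarstatseparable} for the ergodic constant $\Hh^*$, including its identification with a convex-conjugate integral of $F$.

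**Plan for the first part.** The plan is to reduce everything to Proposition \ref{prp:Nashstat}, whose conclusion already characterizes solutions of \eqref{eq:mainstationary} (which specializes to \eqref{eq:mainstat_a=0_Hseparated} under \eqref{eq:H_separable}) via the pair of functionals $\hat{\Psi}_1$, $\hat{\Psi}_2$. The key observation, exactly as in the time-dependent discussion preceding Corollary \ref{crl:Nash_a=0_Hseparated}, is that for separable $H$ the functionals $\hat{\Psi}_1$ and $\hat{\Psi}_2$ differ only by a term depending on $m$ alone. Indeed, from \eqref{eq:Psi1stat} and \eqref{eq:Psi2stat} one computes
\begin{equation*}
\hat{\Psi}_2(m,u)-\hat{\Psi}_1(m,u)=\int\limits_{\Tt^d} \big(m H(x,\nabla u,m)-F_H(x,\nabla u,m)\big)dx=\int\limits_{\Tt^d}\big(F(x,m)-m f(x,m)\big)dx,
\end{equation*}
which is independent of $u$. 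Consequently the inner minimization over $u$ is identical for $\hat{\Psi}_1$ and $\hat{\Psi}_2$, so \eqref{eq:Psi2ineqstat} (for $\hat{\Psi}_2$) and \eqref{eq:Psi2ineqstat_a=0_Hseparated} (for $\hat{\Psi}_1$) are equivalent once $m=m^*$ is fixed. Meanwhile the maximization over the constraint set $\{m>0,\ \int m=1\}$ uses $\hat{\Psi}_1$ in both statements, so \eqref{eq:Psi1ineqstat} and \eqref{eq:Psi1ineqstat_a=0_Hseparated} coincide verbatim. Therefore the saddle-point conditions of the present corollary are identical to those of Proposition \ref{prp:Nashstat}, and the equivalence with solving \eqref{eq:mainstat_a=0_Hseparated} follows immediately.

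**Plan for the formula \eqref{eq:Hstarstatseparable}.** Here I would invoke \eqref{eq:Hstarstat} from Proposition \ref{prp:Nashstat}, which gives $\Hh^*=\hat{\Psi}_2(m^*,u^*)$, and then substitute the difference computed above to obtain
\begin{equation*}
\Hh^*=\hat{\Psi}_2(m^*,u^*)=\hat{\Psi}_1(m^*,u^*)-\int\limits_{\Tt^d}\big(m^* f(x,m^*)-F(x,m^*)\big)dx,
\end{equation*}
which is the first line of \eqref{eq:Hstarstatseparable}. For the second line, I recognize $m^* f(x,m^*)-F(x,m^*)$ as the Legendre transform evaluated at the optimal point: since $F(x,\cdot)$ is convex in $m$ (its derivative $f(x,\cdot)$ is nondecreasing by \eqref{hyp:mon_convexity_a=0_Hseparated}), the supremum defining $F^*(x,w)=\sup_{z}\big(wz-F(x,z)\big)$ is attained where $w=\partial_z F(x,z)=f(x,z)$. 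Setting $w=f(x,m^*)$ makes $z=m^*$ a maximizer, giving $F^*(x,f(x,m^*))=m^* f(x,m^*)-F(x,m^*)$ pointwise; integrating yields the final equality.

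**Main obstacle.** The argument is essentially bookkeeping, so no step is genuinely hard; the only point requiring care is the Legendre-duality identity. One must verify that $f(x,\cdot)$ being merely nondecreasing (rather than strictly increasing) still guarantees that $z=m^*$ \emph{attains} the supremum in $F^*(x,f(x,m^*))$. This follows from the first-order optimality condition $w\in\partial_z F(x,z)$ for convex $F(x,\cdot)$, which at $w=f(x,m^*)$ is satisfied by $z=m^*$; strictness is not needed for attainment, only for uniqueness of the maximizer. I would state this explicitly to make the second line of \eqref{eq:Hstarstatseparable} rigorous.
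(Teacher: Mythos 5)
Your proposal is correct and follows essentially the same route as the paper: the paper states this corollary without a written proof, relying precisely on the observation that $\hat{\Psi}_2-\hat{\Psi}_1=\int_{\Tt^d}\bigl(F(x,m)-mf(x,m)\bigr)dx$ depends on $m$ alone, so the minimization in $u$ is unaffected and Proposition \ref{prp:Nashstat} applies verbatim with $\hat{\Psi}_2$ replaced by $\hat{\Psi}_1$. Your explicit verification of the Legendre identity $F^*(x,f(x,m^*))=m^*f(x,m^*)-F(x,m^*)$, including the remark that attainment needs only convexity of $F(x,\cdot)$ and not strict monotonicity of $f$, is a detail the paper leaves implicit and is handled correctly.
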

The differential game corresponding to this previous corollary is
\begin{equation}\label{eq:gameseparablestat}
\begin{split}
\mbox{Player 1}&\qquad\qquad \sup\left\{\hat{\Psi}_1(m,u):~m \in C^2(\Tt^d),~m>0,~\int\limits_{\Tt^d}m=1\right\}\\
&\\
\mbox{Player 2}&\qquad\qquad \inf\left\{\hat{\Psi}_1(m,u):~u \in C^2(\Tt^d)\right\}.
\end{split}
\end{equation}

Next, we incorporate $\Hh$ in the game as in Proposition \ref{prp:NashstatLagrange}.
\begin{corollary}\label{crl:Nashstat_a=0_HseparatedLagrange}
Suppose that \eqref{hyp:mon_convexity_a=0_Hseparated} holds. Furthermore, let $\tilde{\Psi}_1$ be given by \eqref{eq:Psi1stat_a=0_Hseparated}. Then, for any $m^*,u^* \in C^{2}(\Tt^d),~\Hh^*\in \Rr$ such that $m^*>0$ one has that
\begin{equation}\label{eq:Psi1ineqstat_a=0_HseparatedLagrange}
\tilde{\Psi}_1(m^*,u^*,\Hh^*) \geq \tilde{\Psi}_1(m,u^*,\Hh^*),~ \mbox{for all}~m\in C^{2}(\Tt^d),~m>0.
\end{equation}
and
\begin{equation}\label{eq:Psi2ineqstat_a=0_HseparatedLagrange}
\tilde{\Psi}_1(m^*,u^*,\Hh^*) \leq \tilde{\Psi}_1(m^*,u,\Hh),~ \mbox{for all}~(u,\Hh)\in C^{2}(\Tt^d)\times \Rr,
\end{equation}
if and only if $(m^*,u^*,\Hh^*)$ is a classical solution of \eqref{eq:mainstat_a=0_Hseparated}.
\end{corollary}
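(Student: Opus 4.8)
The plan is to deduce this corollary from Proposition \ref{prp:NashstatLagrange} in exactly the way Corollary \ref{crl:Nash_a=0_Hseparated} was deduced from Proposition \ref{prp:Nashnonzero}: for separable $H$ the two payoffs $\tilde{\Psi}_1$ and $\tilde{\Psi}_2$ differ only by a term depending on $m$ alone, and the whole point is that such a term is invisible to Player~2's minimization.

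First I would record this difference explicitly. Substituting $F_H(x,p,m)=mH_0(x,p)-F(x,m)$ and $mH(x,\nabla u,m)=mH_0(x,\nabla u)-mf(x,m)$ into \eqref{eq:Psi12statLagrange} gives
\begin{equation*}
\tilde{\Psi}_2(m,u,\Hh)-\tilde{\Psi}_1(m,u,\Hh)=\int_{\Tt^d}\big(F(x,m)-m f(x,m)\big)\,dx=:G(m),
\end{equation*}
which is independent of $u$ and $\Hh$. I would also note that for separable $H$ the hypothesis \eqref{hyp:mon_convexity} is equivalent to \eqref{hyp:mon_convexity_a=0_Hseparated}, so Proposition \ref{prp:NashstatLagrange} applies under the stated assumption.

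Next I would match the two pairs of inequalities. The Player~1 condition \eqref{eq:Psi1ineqstat_a=0_HseparatedLagrange} is verbatim the Player~1 condition \eqref{eq:Psi1ineqstatLagrange}, since both involve $\tilde{\Psi}_1$ only. For Player~2, I fix $m=m^*$ and subtract the constant $G(m^*)$ from both sides of \eqref{eq:Psi2ineqstatLagrange}: because $\tilde{\Psi}_2(m^*,u,\Hh)=\tilde{\Psi}_1(m^*,u,\Hh)+G(m^*)$ with $G(m^*)$ independent of $(u,\Hh)$, the inequality \eqref{eq:Psi2ineqstatLagrange} for $\tilde{\Psi}_2$ is equivalent to \eqref{eq:Psi2ineqstat_a=0_HseparatedLagrange} for $\tilde{\Psi}_1$. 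Hence the pair \eqref{eq:Psi1ineqstat_a=0_HseparatedLagrange}--\eqref{eq:Psi2ineqstat_a=0_HseparatedLagrange} holds if and only if the pair \eqref{eq:Psi1ineqstatLagrange}--\eqref{eq:Psi2ineqstatLagrange} does, and by Proposition \ref{prp:NashstatLagrange} the latter is equivalent to $(m^*,u^*,\Hh^*)$ being a classical solution of \eqref{eq:mainstationary}; for separable $H$ this system is precisely \eqref{eq:mainstat_a=0_Hseparated}.

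There is no serious obstacle; the only point requiring care is that the reduction is a genuine equivalence in \emph{both} directions, which is exactly why it is essential that $G$ depend on $m$ alone, so that adding it does not perturb Player~2's minimization. Alternatively, one could give a self-contained proof mirroring Proposition \ref{prp:Nashnonzero}: compute $\frac{\delta\tilde{\Psi}_1}{\delta m}$ together with $\frac{\delta\tilde{\Psi}_1}{\delta u}$ and $\frac{\delta\tilde{\Psi}_1}{\delta \Hh}$ to recover \eqref{eq:mainstat_a=0_Hseparated}, then invoke concavity of $m\mapsto\tilde{\Psi}_1(m,u^*,\Hh^*)$ (from $\partial_m f\ge 0$, i.e.\ convexity of $F$, all other $m$-dependence being linear) and convexity of $(u,\Hh)\mapsto\tilde{\Psi}_1(m^*,u,\Hh)$ (from $\nabla^2_{pp}H_0\ge 0$ with $m^*>0$, and linearity in $\Hh$) to upgrade the stationarity to the global inequalities. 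The first route is shorter and is the one I would write up.
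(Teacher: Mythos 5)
Your proposal is correct and follows exactly the route the paper intends: it reduces the corollary to Proposition \ref{prp:NashstatLagrange} by observing that for separable $H$ the difference $\tilde{\Psi}_2-\tilde{\Psi}_1=\int_{\Tt^d}F(x,m)-mf(x,m)\,dx$ depends on $m$ alone, so Player~2's minimization is unaffected. The paper leaves this corollary without a written proof precisely because it has already made this observation for the time-dependent case, and your write-up supplies the same argument with the bookkeeping done carefully in both directions.
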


The corresponding differential game is
\begin{equation}\label{eq:gameseparableLagrange}
\begin{split}
\mbox{Player 1}&\qquad\qquad \sup\limits_{m\in C^2(\Omega_T),~m>0} \tilde{\Psi}_1(m,u,\Hh)\\
&\\
\mbox{Player 2}&\qquad\qquad \inf\limits_{u\in C^2(\Omega_T),~\Hh\in \Rr} \tilde{\Psi}_1(m,u,\Hh).
\end{split}
\end{equation}

\begin{remark}
Note that the special structure of \eqref{eq:main} when $H$ is separable stems from the Hamiltonian nature of the system. Indeed, \eqref{eq:main} has the form
\[
\begin{cases}
u_t=\frac{\delta \hat{\Psi}_1}{\delta m}(m,u),\\
m_t=-\frac{\delta \hat{\Psi}_1}{\delta u}(m,u),\\
\end{cases}
\]
where $\hat{\Psi}_1$ is given by \eqref{eq:Psi1stat_a=0_Hseparated}. Note that if $(u, m)$ is a solution to \eqref{eq:main}, then $\hat{\Psi}_1(m,u)$ is a quantity that does not vary in time. It is well known (and easily observed) that solutions of Hamiltonian systems are associated to the corresponding functional
\[
\int\limits_0^T \frac{1}{2}\left(\int\limits_{\Tt^d} -u_tm + m_t u dx \right)+\hat{\Psi}_1( m, u) dt,
\]
that indeed coincides with $\Psi_1$ once it is equipped with initial-final data $m^0, u^T$ (see \eqref{eq:Psi1_a=0_Hseparated}). We will come back to the Hamiltonian nature of \eqref{eq:main} in Section \ref{sec:previous} where we discuss connections among \eqref{eq:gameseparable} and optimal-control formulations of \eqref{eq:main} from \cite{LL07}.
\end{remark}

\subsection{First-order stationary problems with congestion and a power-like Hamiltonian}

In this section, we consider the first-order ($\epsi=0$) version of \eqref{eq:mainstationary} with a Hamiltonian \eqref{eq:H_congestion1}, where $(Q,\alpha,\gamma)\in \Rr^{d+1}\times \Rr_{++}$ are given parameters, and $f\in C^1(\Tt^d\times \Rr_{++})$. More precisely, we consider the system
\begin{equation}\label{eq:mainstationary_cong}
\begin{cases}
\frac{|\nabla u+Q|^\gamma}{\gamma m^{\alpha}}=f(x,m)+\bar{H},\\
- \mathrm{div}  \left(m^{1-\alpha} |\nabla u+Q|^{\gamma-2}(\nabla u+Q)\right)=0,\\
m> 0,\ \int\limits_{\Tt^d} m(x)dx=1.
\end{cases}
\end{equation}
Furthermore, \eqref{hyp:mon_convexity} is equivalent to the following assumption.
\begin{hyp}
One has that
\begin{equation}\label{hyp:mon_conv_congestion}
\begin{split}
\alpha\geq0,~\gamma\geq 1\quad\mbox{and}\quad \partial_m f(x,m) \geq 0,\quad (x,m) \in \Tt^d \times \Rr_{++}.
\end{split}
\end{equation}
\end{hyp}

Next, we have that
\begin{equation*}
F_H(x,p,m)= \frac{m^{1-\alpha}|p+Q|^\gamma}{(1-\alpha)\gamma}-F(x,m),
\end{equation*}
where $F(x,m)=\int\limits^{m} f(x,z)dz$. Therefore, $\hat{\Psi}_1,\hat{\Psi}_2$ in \eqref{eq:Psi1stat}, \eqref{eq:Psi2stat} are given by
\begin{equation}\label{eq:hatPsi12_congestion}
\begin{split}
\hat{\Psi}_1(m,u)=&\int\limits_{\Tt^d} \frac{m^{1-\alpha}|\nabla u+Q|^\gamma}{(1-\alpha)\gamma}-F(x,m)dx,\\
\hat{\Psi}_2(m,u)=&\int\limits_{\Tt^d} \frac{m^{1-\alpha}|\nabla u+Q|^\gamma}{\gamma}-m f(x,m)dx,\quad m,u\in C^2(\Tt^d),~m>0.
\end{split}
\end{equation}
Furthermore, we observe that
\begin{equation}\label{eq:Psi1Psi2relation_congestion}
\hat{\Psi}_1(m,u)=\frac{1}{1-\alpha}\hat{\Psi}_2(m,u)+\int\limits_{\Tt^d} \frac{1}{1-\alpha}mf(x,m)-F(x,m)dx.
\end{equation}
Hence, for $\alpha<1$ minimizations in $u$ of $\hat{\Psi}_1$ and $\hat{\Psi}_2$ are equivalent, and Proposition \ref{prp:Nashstat} is valid with $\hat{\Psi}_2$ replaces by $\hat{\Psi}_1$.
\begin{corollary}\label{crl:Nashstat_congestion}
Suppose that \eqref{hyp:mon_conv_congestion} holds and $\alpha<1$. Furthermore, let $\hat{\Psi}_1$ be given by \eqref{eq:hatPsi12_congestion}. Then, for any $m^*,u^* \in C^{2}(\Tt^d)$ such that $m^*>0$ and $\int_{\Tt^d}m^*=1$ one has that
\begin{equation}\label{eq:Psi1ineqstat_congestion}
\hat{\Psi}_1(m^*,u^*) \geq \hat{\Psi}_1(m,u^*),~ \mbox{for all}~m\in C^{2}(\Tt^d),~m>0,~\int\limits_{\Tt^d}m=1.
\end{equation}
and
\begin{equation}\label{eq:Psi2ineqstat_congestion}
\hat{\Psi}_1(m^*,u^*) \leq \hat{\Psi}_1(m^*,u),~ \mbox{for all}~u\in C^{2}(\Tt^d),
\end{equation}
if and only if $(m^*,u^*)$ is a classical solution of \eqref{eq:mainstationary_cong} for some $\Hh^* \in \Rr$.

Moreover, for both cases above one has that
\begin{equation}\label{eq:Hstarcongestion}
\begin{split}
\Hh^*=&\hat{\Psi}_2(m^*,u^*)\\
=&(1-\alpha)\hat{\Psi}_1(m^*,u^*)-\int\limits_{\Tt^d} m^* f(x,m^*)-(1-\alpha)F(x,m^*)dx.
\end{split}
\end{equation}
\end{corollary}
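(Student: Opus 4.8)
The plan is to reduce the statement entirely to Proposition \ref{prp:Nashstat} by exploiting the affine relation \eqref{eq:Psi1Psi2relation_congestion} between $\hat{\Psi}_1$ and $\hat{\Psi}_2$. First I would observe that, since $H$ of the form \eqref{eq:H_congestion1} is a special case of the general Hamiltonian and \eqref{hyp:mon_conv_congestion} is exactly \eqref{hyp:mon_convexity} in this setting, Proposition \ref{prp:Nashstat} applies verbatim: the pair $(m^*,u^*)$ satisfies \eqref{eq:Psi1ineqstat} together with \eqref{eq:Psi2ineqstat} for $\hat{\Psi}_2$ if and only if it solves \eqref{eq:mainstationary}, which here is precisely \eqref{eq:mainstationary_cong}. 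The entire task is thus to show that the two inequalities of the corollary, both written with $\hat{\Psi}_1$, are equivalent to the two inequalities of the proposition.

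For Player 1 there is nothing to do: the maximization inequality \eqref{eq:Psi1ineqstat_congestion} is literally \eqref{eq:Psi1ineqstat}, since the constraint $\int_{\Tt^d} m = 1$ and the functional $\hat{\Psi}_1$ are unchanged. The only point requiring an argument is Player 2, where I must replace $\hat{\Psi}_2$ by $\hat{\Psi}_1$. Fixing $m = m^*$ and regarding both sides of \eqref{eq:Psi1Psi2relation_congestion} as functions of $u$ alone, the integral term is a constant independent of $u$, so that
\[
\hat{\Psi}_1(m^*,u) = \frac{1}{1-\alpha}\,\hat{\Psi}_2(m^*,u) + c(m^*),
\]
with $c(m^*)$ constant in $u$ and, crucially, with multiplicative factor $\tfrac{1}{1-\alpha} > 0$ because $\alpha < 1$. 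Multiplying through by the positive number $1-\alpha$ preserves the sense of the inequality, hence \eqref{eq:Psi2ineqstat_congestion} for $\hat{\Psi}_1$ is equivalent to \eqref{eq:Psi2ineqstat} for $\hat{\Psi}_2$. The two conditions of the corollary are therefore equivalent to the two conditions of Proposition \ref{prp:Nashstat}, and the equivalence with solvability of \eqref{eq:mainstationary_cong} follows.

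It remains to derive \eqref{eq:Hstarcongestion}. By \eqref{eq:Hstarstat} we have $\Hh^* = \hat{\Psi}_2(m^*,u^*)$, which already gives the first equality. Solving \eqref{eq:Psi1Psi2relation_congestion} for $\hat{\Psi}_2$ yields
\[
\hat{\Psi}_2(m,u) = (1-\alpha)\,\hat{\Psi}_1(m,u) - \int\limits_{\Tt^d} m f(x,m) - (1-\alpha) F(x,m)\,dx,
\]
and evaluating at $(m^*,u^*)$ produces the second equality in \eqref{eq:Hstarcongestion}.

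The computation is essentially bookkeeping; the one substantive hypothesis is the sign condition $\alpha < 1$, which guarantees $\tfrac{1}{1-\alpha} > 0$ and thus that passing between $\hat{\Psi}_1$ and $\hat{\Psi}_2$ does not flip the minimization inequality for Player 2. I therefore expect no genuine obstacle, only the need to track this sign carefully: were $\alpha > 1$, the factor $\tfrac{1}{1-\alpha}$ would be negative and the equivalence would turn a saddle-point condition into a maximization in both variables, which is exactly the zero-sum versus potential-game dichotomy flagged for the range $\alpha > 1$ elsewhere in the paper.
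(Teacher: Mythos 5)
Your proof is correct and follows essentially the same route as the paper, which states the affine relation \eqref{eq:Psi1Psi2relation_congestion} and observes that for $\alpha<1$ the positive factor $\tfrac{1}{1-\alpha}$ and the $u$-independent remainder make minimization in $u$ of $\hat{\Psi}_1$ and $\hat{\Psi}_2$ equivalent, so that Proposition \ref{prp:Nashstat} applies with $\hat{\Psi}_2$ replaced by $\hat{\Psi}_1$. Your derivation of \eqref{eq:Hstarcongestion} by solving \eqref{eq:Psi1Psi2relation_congestion} for $\hat{\Psi}_2$ is exactly the intended bookkeeping.
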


Furthermore, $\tilde{\Psi}_1,\tilde{\Psi}_2$ in \eqref{eq:Psi12statLagrange} are given by
\begin{equation}\label{eq:hatPsi12_congestion_Lagrange}
\begin{split}
\tilde{\Psi}_1(m,u,\Hh)=&\int\limits_{\Tt^d} \frac{m^{1-\alpha}|p+Q|^\gamma}{(1-\alpha)\gamma}-F(x,m)+\Hh(1-m)dx,\\
\tilde{\Psi}_2(m,u,\Hh)=&\int\limits_{\Tt^d} \frac{m^{1-\alpha}|p+Q|^\gamma}{\gamma}-m f(x,m)+\Hh(1-m)dx,
\end{split}
\end{equation}
for $m,u\in C^2(\Tt^d),~m>0$. Similarly, for $\alpha<1$ minimizations in $u$ of $\tilde{\Psi}_1$ and $\tilde{\Psi}_2$ are equivalent, and an analog of Proposition \ref{prp:NashstatLagrange} is valid.
\begin{corollary}\label{crl:NashstatLagrange_congestion}
Suppose that \eqref{hyp:mon_conv_congestion} holds and $\alpha<1$. Furthermore, let $\tilde{\Psi}_1$ be given by \eqref{eq:hatPsi12_congestion_Lagrange}. Then, for any $m^*,u^* \in C^{2}(\Tt^d),~\Hh^*\in \Rr$ such that $m^*>0$ one has that
\begin{equation}\label{eq:Psi1ineqstat_congestionLagrange}
\tilde{\Psi}_1(m^*,u^*,\Hh^*) \geq \tilde{\Psi}_1(m,u^*,\Hh^*),~ \mbox{for all}~m\in C^{2}(\Tt^d),~m>0.
\end{equation}
and
\begin{equation}\label{eq:Psi2ineqstat_congestionLagrange}
\tilde{\Psi}_1(m^*,u^*,\Hh^*) \leq \tilde{\Psi}_1(m^*,u,\Hh),~ \mbox{for all}~(u,\Hh)\in C^{2}(\Tt^d)\times \Rr,
\end{equation}
if and only if $(m^*,u^*,\Hh^*)$ is a classical solution of \eqref{eq:mainstationary_cong}.
\end{corollary}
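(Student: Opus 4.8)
The plan is to deduce the corollary from Proposition~\ref{prp:NashstatLagrange} rather than redo the variational computation from scratch. First I would record that the power-like Hamiltonian \eqref{eq:H_congestion1} satisfies the general assumption \eqref{hyp:mon_convexity} precisely when \eqref{hyp:mon_conv_congestion} holds, so that Proposition~\ref{prp:NashstatLagrange} applies to the functionals $\tilde{\Psi}_1,\tilde{\Psi}_2$ of \eqref{eq:hatPsi12_congestion_Lagrange} (these are exactly \eqref{eq:Psi12statLagrange} specialized to $\epsi=0$ and this $H$). That proposition gives that $(m^*,u^*,\Hh^*)$ solves \eqref{eq:mainstationary_cong} if and only if the two inequalities \eqref{eq:Psi1ineqstatLagrange} and \eqref{eq:Psi2ineqstatLagrange} hold. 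The first of these is literally the Player~1 inequality \eqref{eq:Psi1ineqstat_congestionLagrange} that we want, so the whole corollary reduces to showing that the Player~2 inequality for $\tilde{\Psi}_1$, namely \eqref{eq:Psi2ineqstat_congestionLagrange}, is equivalent to the Player~2 inequality for $\tilde{\Psi}_2$, namely \eqref{eq:Psi2ineqstatLagrange}.

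For that equivalence I would fix $m=m^*$ and compare the maps $(u,\Hh)\mapsto\tilde{\Psi}_i(m^*,u,\Hh)$ for $i=1,2$. Both have the form $\hat{\Psi}_i(m^*,u)+\Hh\int_{\Tt^d}(1-m^*)\,dx$ with the \emph{same} linear-in-$\Hh$ term. Hence a point can satisfy the Player~2 minimization only if this term is bounded below in $\Hh$, which forces its coefficient $\int_{\Tt^d}(1-m^*)\,dx$ to vanish, i.e.\ the constraint $\int_{\Tt^d}m^*=1$. On this constraint set the two functionals collapse to $\hat{\Psi}_1(m^*,u)$ and $\hat{\Psi}_2(m^*,u)$, which by \eqref{eq:Psi1Psi2relation_congestion} satisfy $\hat{\Psi}_1(m^*,u)=\tfrac{1}{1-\alpha}\hat{\Psi}_2(m^*,u)+c(m^*)$, where $c(m^*)$ is independent of $u$. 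Since $\alpha<1$ gives $\tfrac{1}{1-\alpha}>0$, the two depend on $u$ through the same functional up to a positive multiple and an additive constant, so they are minimized by the same $u^*$. This yields the equivalence of \eqref{eq:Psi2ineqstat_congestionLagrange} and \eqref{eq:Psi2ineqstatLagrange}, and the corollary follows. The ergodic constant $\Hh^*$ is pinned down not here but through the Player~1 inequality: as in Proposition~\ref{prp:Nashstat}, stationarity of $\tilde{\Psi}_1$ in $m$ yields the Hamilton--Jacobi equation $|\nabla u^*+Q|^\gamma/(\gamma (m^*)^\alpha)=f(x,m^*)+\Hh^*$.

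The step that needs care---the main obstacle---is the $\Hh$-minimization. After adding the Lagrange term, the clean relation \eqref{eq:Psi1Psi2relation_congestion} between $\hat{\Psi}_1$ and $\hat{\Psi}_2$ picks up an extra piece proportional to $\Hh\int_{\Tt^d}(1-m^*)\,dx$, which does not \emph{a priori} vanish and would otherwise break the positive-multiple relationship that transfers the minimization from $\tilde{\Psi}_2$ to $\tilde{\Psi}_1$. The resolution, as above, is that the shared linearity in $\Hh$ already forces the constraint on any admissible point, and on the constraint set the offending term is zero. As a consistency check one can instead argue directly in the spirit of Proposition~\ref{prp:Nashnonzero}: for $0\leq\alpha<1$, $\gamma\geq1$ and $\partial_m f\geq0$ the map $m\mapsto\tilde{\Psi}_1(m,u^*,\Hh^*)$ is concave (because $m\mapsto m^{1-\alpha}/(1-\alpha)$ is concave, $-F$ is concave, and $\Hh^*(1-m)$ is affine) while $(u,\Hh)\mapsto\tilde{\Psi}_1(m^*,u,\Hh)$ is convex (because $p\mapsto|p+Q|^\gamma$ is convex and the $\Hh$-dependence is affine), so the critical-point conditions---the two equations and the constraint in \eqref{eq:mainstationary_cong}---upgrade to the global saddle inequalities.
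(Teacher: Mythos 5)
Your proposal is correct and follows essentially the paper's route: the paper justifies this corollary only by remarking that, thanks to \eqref{eq:Psi1Psi2relation_congestion} and $\alpha<1$, the Player-2 minimizations for $\tilde{\Psi}_1$ and $\tilde{\Psi}_2$ are equivalent, so that Proposition \ref{prp:NashstatLagrange} transfers. Your additional observation that the shared linear-in-$\Hh$ term forces $\int_{\Tt^d} m^*=1$ before \eqref{eq:Psi1Psi2relation_congestion} can be invoked fills in a step the paper leaves implicit (the text speaks only of ``minimizations in $u$'' although the Player-2 variable is the pair $(u,\Hh)$, and the two Lagrangian functionals are not related by a positive affine transformation off the constraint set).
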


To the best of our knowledge, zero-sum game formulations of \eqref{eq:mainstationary_cong} for the parameters range $0\leq \alpha<1< \gamma$ in Corollaries \ref{crl:Nashstat_congestion}, \ref{crl:NashstatLagrange_congestion} are new in the literature.

\begin{remark}\label{rmk:BBformcong_a<1}
Using the zero-sum game formulation of \eqref{eq:mainstationary_cong} in Corollary \ref{crl:Nashstat_congestion} one can formally derive a convex optimization formulation of \eqref{eq:mainstationary_cong} in the spirit of \cite{benbren'00}. Indeed, one has that
\begin{equation*}
\begin{split}
&\inf\limits_{u} \sup\limits_m \hat{\Psi}_1(m,u)\\
=& \inf\limits_{u} \sup\limits_m \int\limits_{\Tt^d} \frac{m^{1-\alpha}|\nabla u+Q|^\gamma}{(1-\alpha)\gamma}-F(x,m)dx\\
=& \inf\limits_{u} \sup\limits_m \sup\limits_r \int\limits_{\Tt^d} \frac{m^{1-\alpha}}{(1-\alpha)}\left(r\cdot(\nabla u+Q)-\frac{|r|^{\gamma'}}{\gamma'}\right)-F(x,m)dx\\
=& \sup\limits_{m,r}  \inf\limits_{u} \int\limits_{\Tt^d} -\div \left(\frac{m^{1-\alpha}}{(1-\alpha)}r\right) u+\frac{m^{1-\alpha}}{(1-\alpha)} \left(r\cdot Q-\frac{|r|^{\gamma'}}{\gamma'}\right)-F(x,m)dx,
\end{split}
\end{equation*}
where $\gamma'=\frac{\gamma}{\gamma-1}$. Since the last expression in the previous formula is linear in $u$ we have that $\inf\limits_{u}$ is finite only when
\begin{equation}\label{eq:brenben_r_constraint}
\div(m^{1-\alpha} r)=0.
\end{equation}
Therefore, we obtain that
\begin{equation*}
\begin{split}
&\inf\limits_{u} \sup\limits_m \hat{\Psi}_1(m,u) \\
=& \sup\limits_{m,r}  \left\{\int\limits_{\Tt^d} \frac{m^{1-\alpha}}{(1-\alpha)} \left(r\cdot Q-\frac{|r|^{\gamma'}}{\gamma'}\right)-F(x,m)dx~\mbox{s.t.}~\eqref{eq:brenben_r_constraint}~\mbox{holds}\right\}
\end{split}.
\end{equation*}
Next, we denote by $w=m^{1-\alpha}r$, and \eqref{eq:brenben_r_constraint} becomes
\begin{equation}\label{eq:brenben_w_constraint}
\div (w)=0.
\end{equation}
Furthermore, we have that
\begin{equation*}
\begin{split}
&\sup\limits_{m,r}  \left\{\int\limits_{\Tt^d} \frac{m^{1-\alpha}}{(1-\alpha)} \left(r\cdot Q-\frac{|r|^{\gamma'}}{\gamma'}\right)-F(x,m)dx~\mbox{s.t.}~\eqref{eq:brenben_r_constraint}~\mbox{holds}\right\}\\
=&\sup\limits_{m,w} \left\{ \int\limits_{\Tt^d} \frac{1}{(1-\alpha)} w\cdot Q-\frac{|w|^{\gamma'}}{(1-\alpha)\gamma'm^{(\gamma'-1)(1-\alpha)}}-F(x,m)dx~\mbox{s.t.}~\eqref{eq:brenben_w_constraint}~\mbox{holds} \right\}\\
=&-\inf\limits_{m,w} \left\{ \int\limits_{\Tt^d} -\frac{1}{(1-\alpha)} w\cdot Q+\frac{|w|^{\gamma'}}{(1-\alpha)\gamma'm^{(\gamma'-1)(1-\alpha)}}+F(x,m)dx~\mbox{s.t.}~\eqref{eq:brenben_w_constraint}~\mbox{holds} \right\}.
\end{split}
\end{equation*}
For $m \in C^1(\Tt^d),m>0$ and $w\in C^1(\Tt^d;\Rr^d)$ denote by
\begin{equation}\label{eq:brenben_Phi}
\Phi(m,w)=\int\limits_{\Tt^d} -\frac{1}{(1-\alpha)} w\cdot Q+\frac{|w|^{\gamma'}}{(1-\alpha)\gamma'm^{(\gamma'-1)(1-\alpha)}}+F(x,m)dx.
\end{equation}
Therefore, \eqref{eq:mainstationary_cong} can be seen as optimality conditions for the variational problem
\begin{equation}\label{eq:brenben_cong_a<1}
\inf\limits_{m,w} \left\{ \Phi(m,w)~\mbox{s.t.}~m>0, \int\limits_{\Tt^d}m=1~\mbox{and}~\eqref{eq:brenben_w_constraint}~\mbox{holds} \right\},
\end{equation}
where $u$ is the adjoint variable corresponding to \eqref{eq:brenben_w_constraint}. The transformation from $(m,u)$ to $(m,w)$ and vice versa is given by
\begin{equation}\label{eq:u->w}
w=m^{1-\alpha}|\nabla u+Q|^{\gamma-2}(\nabla u+Q),
\end{equation}
and
\begin{equation}\label{eq:w->u}
\nabla u+Q = m^{(\alpha -1)(\gamma'-1)} |w|^{\gamma'-2} w.
\end{equation}
The key property of \eqref{eq:brenben_cong_a<1} is that \eqref{eq:brenben_w_constraint} is a linear constraint, and $(m,w)\mapsto \Phi(m,w)$ is a convex functional when $0\leq \alpha <1<\gamma$. Indeed, for $\alpha, \gamma$ in this range we have that the function $(a,b) \mapsto \frac{|b|^{\gamma'}}{a^{(\gamma'-1)(1-\alpha)}},~a>0$ is convex. Furthermore, $(m,w) \mapsto F(x,m)$ is convex by \eqref{hyp:mon_conv_congestion}.
\end{remark}

For $\alpha > 1$ we observe from \eqref{eq:Psi1Psi2relation_congestion} that the minimization of $u\mapsto\hat{\Psi}_2(m,u)$ corresponds to the maximization of $u\mapsto \hat{\Psi}_1(m,u)$. Therefore, we have the following versions of Corollaries \ref{crl:Nashstat_congestion}, \ref{crl:NashstatLagrange_congestion}. 
\begin{corollary}\label{crl:Nashstat_congestion_a>1}
Suppose that \eqref{hyp:mon_conv_congestion} holds and $\alpha>1$. Furthermore, let $\hat{\Psi}_1$ be given by \eqref{eq:hatPsi12_congestion}. Then, for any $m^*,u^* \in C^{2}(\Tt^d)$ such that $m^*>0$ and $\int_{\Tt^d}m^*=1$ one has that
\begin{equation}\label{eq:Psi1ineqstat_congestion_a>1}
\hat{\Psi}_1(m^*,u^*) \geq \hat{\Psi}_1(m,u^*),~ \mbox{for all}~m\in C^{2}(\Tt^d),~m>0,~\int\limits_{\Tt^d}m=1.
\end{equation}
and
\begin{equation}\label{eq:Psi2ineqstat_congestion_a>1}
\hat{\Psi}_1(m^*,u^*) \geq \hat{\Psi}_1(m^*,u),~ \mbox{for all}~u\in C^{2}(\Tt^d),
\end{equation}
if and only if $(m^*,u^*)$ is a classical solution of \eqref{eq:mainstationary_cong} for some $\Hh^* \in \Rr$.

Moreover, for both cases above, \eqref{eq:Hstarcongestion} holds. 
\end{corollary}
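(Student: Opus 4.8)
The plan is to deduce the statement directly from Proposition \ref{prp:Nashstat} rather than redo the variational computation from scratch, the only genuinely new ingredient being sign bookkeeping in the algebraic identity \eqref{eq:Psi1Psi2relation_congestion}. First I would observe that for the Hamiltonian \eqref{eq:H_congestion1} the standing assumption \eqref{hyp:mon_convexity} is equivalent to \eqref{hyp:mon_conv_congestion}, so that Proposition \ref{prp:Nashstat} applies verbatim (with $\epsi=0$ the Laplacian terms in $\hat{\Psi}_1,\hat{\Psi}_2$ simply drop out). Consequently, $(m^*,u^*)$ is a classical solution of \eqref{eq:mainstationary_cong} for some $\Hh^*\in\Rr$ if and only if simultaneously $\hat{\Psi}_1(m^*,u^*)\geq\hat{\Psi}_1(m,u^*)$ for all admissible $m$ and $\hat{\Psi}_2(m^*,u^*)\leq\hat{\Psi}_2(m^*,u)$ for all $u\in C^2(\Tt^d)$.

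The first of these two conditions already coincides with \eqref{eq:Psi1ineqstat_congestion_a>1}, so the only real work is to rewrite the second ($\hat{\Psi}_2$-)condition in terms of $\hat{\Psi}_1$. Fixing $m=m^*$, identity \eqref{eq:Psi1Psi2relation_congestion} reads $\hat{\Psi}_1(m^*,u)=\frac{1}{1-\alpha}\hat{\Psi}_2(m^*,u)+C(m^*)$, with $C(m^*)=\int_{\Tt^d}\left(\frac{1}{1-\alpha}m^*f(x,m^*)-F(x,m^*)\right)dx$ independent of $u$. The decisive point is that for $\alpha>1$ the coefficient $\frac{1}{1-\alpha}$ is negative, so the affine map $t\mapsto\frac{1}{1-\alpha}t+C(m^*)$ is strictly decreasing; applying it to the value $\hat{\Psi}_2(m^*,u)$ shows that ``$\hat{\Psi}_2(m^*,u^*)\leq\hat{\Psi}_2(m^*,u)$ for all $u$'' is equivalent to ``$\hat{\Psi}_1(m^*,u^*)\geq\hat{\Psi}_1(m^*,u)$ for all $u$'', which is precisely \eqref{eq:Psi2ineqstat_congestion_a>1}. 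Running this equivalence in both directions yields the claimed biconditional, and \eqref{eq:Hstarcongestion} follows from $\Hh^*=\hat{\Psi}_2(m^*,u^*)$ (equation \eqref{eq:Hstarstat}) by solving \eqref{eq:Psi1Psi2relation_congestion} for $\hat{\Psi}_2$.

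There is essentially no analytic obstacle here beyond the sign accounting; the substantive content was already carried by Proposition \ref{prp:Nashstat}, and the case $\alpha>1$ differs from the case $\alpha<1$ of Corollary \ref{crl:Nashstat_congestion} only through the sign of $1-\alpha$. If one prefers a self-contained argument paralleling the proof of Proposition \ref{prp:Nashnonzero}, the same conclusion follows by verifying concavity directly: the map $m\mapsto m^{1-\alpha}/(1-\alpha)$ has second derivative $-\alpha m^{-\alpha-1}<0$ and is weighted by the nonnegative factor $|\nabla u^*+Q|^\gamma/\gamma$, while $-F(x,\cdot)$ is concave by \eqref{hyp:mon_conv_congestion}, so $m\mapsto\hat{\Psi}_1(m,u^*)$ is concave and its constrained critical point is a maximum; and for $\alpha>1$ the factor $m^{1-\alpha}/(1-\alpha)$ is negative, so weighting the convex map $u\mapsto|\nabla u+Q|^\gamma/\gamma$ (convex since $\gamma\geq1$) renders $u\mapsto\hat{\Psi}_1(m^*,u)$ concave, whence its critical point is likewise a maximum. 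The one point demanding care in this alternative route is the constrained maximization in $m$, where the constraint $\int_{\Tt^d}m=1$ produces the multiplier $\Hh^*$ that must be matched against the first line of \eqref{eq:mainstationary_cong}.
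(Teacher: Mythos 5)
Your proposal is correct and follows essentially the same route as the paper: the paper derives this corollary from Proposition \ref{prp:Nashstat} by observing that, since $\frac{1}{1-\alpha}<0$ for $\alpha>1$, the identity \eqref{eq:Psi1Psi2relation_congestion} turns minimization of $u\mapsto\hat{\Psi}_2(m^*,u)$ into maximization of $u\mapsto\hat{\Psi}_1(m^*,u)$, exactly as you argue. Your sign bookkeeping, the derivation of \eqref{eq:Hstarcongestion} from \eqref{eq:Hstarstat}, and the supplementary concavity check are all sound.
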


\begin{corollary}\label{crl:NashstatLagrange_congestion_a>1}
Suppose that \eqref{hyp:mon_conv_congestion} holds and $\alpha>1$. Furthermore, let $\tilde{\Psi}_1$ be given by \eqref{eq:hatPsi12_congestion_Lagrange}. Then, for any $m^*,u^* \in C^{2}(\Tt^d),~\Hh^*\in \Rr$ such that $m^*>0$ one has that
\begin{equation}\label{eq:Psi1ineqstat_congestionLagrange_a>1}
\tilde{\Psi}_1(m^*,u^*,\Hh^*) \geq \tilde{\Psi}_1(m,u^*,\Hh^*),~ \mbox{for all}~m\in C^{2}(\Tt^d),~m>0.
\end{equation}
and
\begin{equation}\label{eq:Psi2ineqstat_congestionLagrange_a>1}
\tilde{\Psi}_1(m^*,u^*,\Hh^*) \geq \tilde{\Psi}_1(m^*,u,\Hh),~ \mbox{for all}~(u,\Hh)\in C^{2}(\Tt^d)\times \Rr,
\end{equation}
if and only if $(m^*,u^*,\Hh^*)$ is a classical solution of \eqref{eq:mainstationary_cong}.
\end{corollary}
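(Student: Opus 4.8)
The plan is to deduce this corollary from Proposition \ref{prp:NashstatLagrange} in exactly the way Corollary \ref{crl:Nashstat_congestion_a>1} is deduced from Proposition \ref{prp:Nashstat}: the sole effect of passing from $\alpha<1$ to $\alpha>1$ is that the factor relating $\tilde{\Psi}_1$ and $\tilde{\Psi}_2$ changes sign, so that the second player's minimization of $\tilde{\Psi}_2$ turns into a maximization of $\tilde{\Psi}_1$. This is precisely why \eqref{eq:Psi2ineqstat_congestionLagrange_a>1} points in the direction opposite to \eqref{eq:Psi2ineqstatLagrange}. The first-player condition \eqref{eq:Psi1ineqstat_congestionLagrange_a>1} is literally identical to \eqref{eq:Psi1ineqstatLagrange}, so there is nothing to do there.

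First I would record the Lagrangian analogue of \eqref{eq:Psi1Psi2relation_congestion}. Writing $c=\frac{1}{1-\alpha}$ and combining \eqref{eq:hatPsi12_congestion_Lagrange} with \eqref{eq:Psi1Psi2relation_congestion}, a direct computation gives
\[
\tilde{\Psi}_1(m,u,\Hh)=c\,\tilde{\Psi}_2(m,u,\Hh)+(1-c)\,\Hh\left(1-\int\limits_{\Tt^d}m\,dx\right)+\int\limits_{\Tt^d}\big(c\,m f(x,m)-F(x,m)\big)\,dx.
\]
The crucial observation is that as soon as $m=m^*$ with $\int_{\Tt^d}m^*=1$, the middle term vanishes for \emph{every} $\Hh$ and the last term is a constant independent of $(u,\Hh)$; hence on this slice $\tilde{\Psi}_1(m^*,\cdot,\cdot)=c\,\tilde{\Psi}_2(m^*,\cdot,\cdot)+\text{const}$ with $c<0$. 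Consequently, minimizing $\tilde{\Psi}_2(m^*,\cdot,\cdot)$ over $(u,\Hh)$ is equivalent to maximizing $\tilde{\Psi}_1(m^*,\cdot,\cdot)$ over $(u,\Hh)$.

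For the direct implication, if $(m^*,u^*,\Hh^*)$ solves \eqref{eq:mainstationary_cong} then $\int_{\Tt^d}m^*=1$, and Proposition \ref{prp:NashstatLagrange} yields \eqref{eq:Psi1ineqstatLagrange}, which is \eqref{eq:Psi1ineqstat_congestionLagrange_a>1}, together with the $\tilde{\Psi}_2$-minimality \eqref{eq:Psi2ineqstatLagrange}; applying the sign flip from the displayed identity converts the latter into \eqref{eq:Psi2ineqstat_congestionLagrange_a>1}. For the converse I would argue in two steps. Since the $\Hh$-dependence of $\tilde{\Psi}_1(m^*,u^*,\cdot)$ is the affine map $\Hh\mapsto \hat{\Psi}_1(m^*,u^*)+\Hh\big(1-\int_{\Tt^d}m^*\big)$, the assumed maximality \eqref{eq:Psi2ineqstat_congestionLagrange_a>1} can hold over all $\Hh\in\Rr$ only if the slope vanishes, i.e. $\int_{\Tt^d}m^*=1$. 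With this normalization the displayed identity applies on the slice $m=m^*$, so maximality of $\tilde{\Psi}_1(m^*,\cdot,\cdot)$ at $(u^*,\Hh^*)$ is equivalent to minimality of $\tilde{\Psi}_2(m^*,\cdot,\cdot)$ there; together with \eqref{eq:Psi1ineqstat_congestionLagrange_a>1}, which coincides with \eqref{eq:Psi1ineqstatLagrange}, these are exactly the hypotheses of Proposition \ref{prp:NashstatLagrange}, whence $(m^*,u^*,\Hh^*)$ solves \eqref{eq:mainstationary_cong}.

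The step I expect to require the most care is the converse normalization: the hypotheses do not presuppose $\int_{\Tt^d}m^*=1$, so one must first extract it from the $\Hh$-stationarity before the sign-flip argument becomes legitimate, and one must verify that the affine $\Hh$-term admits no finite maximizer unless the mass constraint is satisfied. The remaining computations, namely the displayed identity and the bookkeeping of the constant, are elementary, as in the convexity arguments of Proposition \ref{prp:Nashnonzero}.
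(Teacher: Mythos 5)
Your proof is correct and follows essentially the same route as the paper, which states this corollary without proof as an immediate consequence of the sign flip in the relation $\tilde{\Psi}_1 = \tfrac{1}{1-\alpha}\tilde{\Psi}_2 + (\text{terms independent of } (u,\Hh) \text{ on the slice } \int_{\Tt^d} m^*=1)$ when $\alpha>1$. Your extra step in the converse direction — extracting $\int_{\Tt^d} m^*=1$ from the affine $\Hh$-dependence before invoking the sign flip — is a detail the paper glosses over, and you handle it correctly.
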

\begin{remark}\label{rmk:potGame_a>1}
Corollaries \ref{crl:Nashstat_congestion_a>1}, \ref{crl:NashstatLagrange_congestion_a>1} assert that \eqref{eq:mainstationary} corresponds to Nash equilibria of
\begin{equation}\label{eq:potgame}
\begin{split}
\mbox{Player 1}&\qquad\qquad \sup\left\{\hat{\Psi}_1(m,u):~m \in C^2(\Tt^d),~m>0,~\int\limits_{\Tt^d}m=1\right\}\\
&\\
\mbox{Player 2}&\qquad\qquad \sup\left\{\hat{\Psi}_1(m,u):~u \in C^2(\Tt^d)\right\},
\end{split}
\end{equation}
and
\begin{equation}\label{eq:potgameLagrange}
\begin{split}
\mbox{Player 1}&\qquad\qquad \sup\limits_{m\in C^2(\Omega_T),~m>0} \tilde{\Psi}_1(m,u,\Hh)\\
&\\
\mbox{Player 2}&\qquad\qquad \sup\limits_{u\in C^2(\Omega_T),~\Hh\in \Rr} \tilde{\Psi}_1(m,u,\Hh),
\end{split}
\end{equation}
Unlike \eqref{eq:gameseparablestat} and \eqref{eq:gameseparableLagrange} that are zero-sum games \eqref{eq:potgame} and \eqref{eq:potgameLagrange} are potential games with potentials $\hat{\Psi}_1$ and $\tilde{\Psi}_1$, respectively.
\end{remark}

\section{Previously known variational principles}\label{sec:previous}

In this section, we discuss how infinite-dimensional differential game formulations in Propositions \ref{prp:Nashnonzero}, \ref{prp:Nashstat}, \ref{prp:NashstatLagrange} are related to the variational principles in the MFG literature.

\subsection{Infinite-dimensional optimal control formulation}

Throughout this section we assume that $H$ is given by \eqref{eq:H_separable}, where $H_0 \in C^2(\Tt^d \times \Rr^d),~f\in C^1(\Tt^d\times \Rr_{++})$, and \eqref{hyp:mon_convexity_a=0_Hseparated} holds. As before, $F(x,m)=\int\limits^m f(x,z)dz,~m>0$, and
\begin{equation*}
F^*(x,w) =\sup\limits_{z\in \Rr_{+}} w z - F(x,z),~w\in \Rr.
\end{equation*}
Furthermore, denote by
\begin{equation*}
L_0(x,q)=\sup\limits_{p \in \Rr^d} q\cdot p - H_0(x,p),
\end{equation*}
the convex conjugate of $p\mapsto H_0(x,p)$. If $p\mapsto H_0(x,p)$ is strictly convex and coercive uniformly in $x$ then $L_0\in C^2(\Tt^d,\Rr^d)$, and $q \mapsto L_0(x,q)$ is strictly convex and coercive uniformly in $x$. Moreover,
\begin{equation*}
H_0(x,p)+L_0(x,q) \geq p \cdot q,\quad \mbox{for all}\quad (x,p,q)\in \Tt^d \times \Rr^d \times \Rr^d
\end{equation*}
with an equality if and only if
\begin{equation*}
q=\nabla_p H_0(x,p) \quad \mbox{or}\quad p=\nabla_q L_0(x,q).
\end{equation*}

In \cite{LL07}, Lasry and Lions observed that \eqref{eq:main} is equivalent to two infinite-dimensional optimal control problems. For the first problem, consider the cost functional
\begin{equation}\label{eq:Bcost}
\mathcal{B}(r)= \int\limits_{\Omega_T} L_0(x,-r(x,t)) m(x,t)+F(x,m(x,t))dxdt+\int\limits_{\Tt^d} u^T(x) m(x,T) dx,
\end{equation}
where $r:\Omega_T \to \Rr^d$ is the control, $m$ is the state, and the dynamics is according to a Fokker-Planck equation
\begin{equation}\label{eq:FPdynamics}
\begin{cases}
m_t(x,t)-\epsi \Delta m(x,t) + \div \big( m(x,t) r(x,t) \big)=0,\\
m(x,0)=m^0(x),~ (x,t) \in \Omega_T.
\end{cases}
\end{equation}
Thus, the first optimal control problem is
\begin{equation}\label{eq:FPcontrol}
\inf \{\mathcal{B}(r)~\mbox{s.t.}~\eqref{eq:FPdynamics}~\mbox{holds}\}
\end{equation}
For the second problem consider the functional
\begin{equation}\label{eq:Acost}
\mathcal{A}(s)=\int\limits_{\Omega_T} F^*(x,s(x,t))dxdt-\int\limits_{\Tt^d} u(x,0) m^0(x)dx,
\end{equation}
where $s:\Omega_T\to \Rr$ is the control, $u$ is the state, and the dynamics is given by
\begin{equation}\label{eq:HJdynamics}
\begin{cases}
-u_t(x,t)-\epsi \Delta u(x,t) +H_0\big(x,\nabla u(x,t)\big)=s(x,t),\\
u(x,T)=u^T(x),~(x,t)\in \Omega_T.
\end{cases}
\end{equation}
Hence, the second optimal control problem is
\begin{equation}\label{eq:HJcontrol}
\inf \{\mathcal{A}(s)~\mbox{s.t.}~\eqref{eq:HJdynamics}~\mbox{holds}\}
\end{equation}
In \cite{LL07}, the authors observe that \eqref{eq:FPcontrol}, \eqref{eq:HJcontrol} are dual optimization problems (in the sense of Fenchel-Rockafellar) that yield \eqref{eq:main} as optimality conditions. Additionally, under convexity assumptions for $p\mapsto H_0(x,p)$ and $m \mapsto F(x,m)$ the cost functionals $s\mapsto \mathcal{A}(s)$ and $r\mapsto\mathcal{B}(r)$ are convex. 

Here, we observe that the differential-game formulation \eqref{eq:gameseparable} is the Hamiltonian viewpoint for the optimal control problems \eqref{eq:FPcontrol} and \eqref{eq:HJcontrol}. For that, we briefly recall the Hamiltonian formalism for finite-dimensional optimal control problems and apply it to \eqref{eq:FPcontrol}, \eqref{eq:HJcontrol} to arrive at \eqref{eq:gameseparable}.

Consider a finite-dimensional optimal control problem
\begin{equation}\label{eq:finitecontrol}
\begin{split}
&\inf\limits_{c}\int\limits_a^b l(\gamma(\tau),c(\tau)) d \tau+\phi(\gamma(b))\\
&\dot{\gamma}(\tau)=k(\gamma(\tau),c(\tau)),~\tau \in (a,b)\\
&\gamma(a)=x_0
\end{split}
\end{equation}
In \eqref{eq:finitecontrol} $\gamma$ is the state variable and $c$ is the control. To obtain the Hamiltonian formulation of \eqref{eq:finitecontrol} one introduces an adjoint variable, $p$, and transforms \eqref{eq:finitecontrol} into an equivalent problem
\begin{equation*}
\begin{split}
&\inf\limits_{\gamma,c} \sup\limits_p \int\limits_a^b l(\gamma(\tau),c(\tau))-p(\tau)\big(\dot{\gamma}(\tau)-k(\gamma(\tau),c(\tau))\big) d \tau +\phi(\gamma(b))\\
&\gamma(a)=\gamma_0
\end{split}
\end{equation*}
Next, we proceed by formally interchanging the $\inf \sup$ order and eliminating the control (Pontryagin maximum principle):
\begin{equation*}
\begin{split}
&\inf\limits_{\gamma,c} \sup\limits_p \int\limits_a^b l(\gamma(\tau),c(\tau))-p(\tau)\big(\dot{\gamma}(\tau)-k(\gamma(\tau),c(\tau))\big) d \tau +\phi(\gamma(b))\\
=&\sup\limits_p \inf\limits_{\gamma,c}  \int\limits_a^b l(\gamma(\tau),c(\tau))-p(\tau)\big(\dot{\gamma}(\tau)-k(\gamma(\tau),c(\tau))\big) d \tau +\phi(\gamma(b))\\
=&\sup\limits_p \inf\limits_{\gamma} \inf\limits_{c}  \int\limits_a^b l(\gamma(\tau),c(\tau))-p(\tau)\big(\dot{\gamma}(\tau)-k(\gamma(\tau),c(\tau))\big) d \tau +\phi(\gamma(b))\\
=&\sup\limits_p \inf\limits_{\gamma}   \int\limits_a^b -p(\tau) \dot{\gamma}(\tau)-h(\gamma(\tau),p(\tau)) d \tau +\phi(\gamma(b)),
\end{split}
\end{equation*}
where
\begin{equation*}
h(\gamma,p)= \sup\limits_{c} -p\cdot k(\gamma,c) - L(\gamma,c).
\end{equation*}
Therefore, \eqref{eq:finitecontrol} is formally equivalent to the problem
\begin{equation}\label{eq:Hamform}
\begin{split}
&\sup\limits_p \inf\limits_{\gamma}   \int\limits_a^b -p(\tau) \dot{\gamma}(\tau)-h(\gamma(\tau),p(\tau)) d \tau +\phi(\gamma(b))\\
&\gamma(a)=\gamma_0.
\end{split}
\end{equation}
If we calculate the variational derivatives of \eqref{eq:Hamform} with respect to $p$ and $\gamma$ we arrive at the system
\begin{equation}\label{eq:Hamsystem}
\begin{cases}
\dot{\gamma}(\tau)=-\nabla_p h(\gamma(\tau),p(\tau)),\quad \gamma(a)=\gamma_0,\\
\dot{p}(\tau)=\nabla_\gamma h(\gamma(\tau),p(\tau)),\quad p(b)=\nabla \phi(\gamma(b)).
\end{cases}
\end{equation}
This previous system is the Hamiltonian formulation of \eqref{eq:finitecontrol}. Now, let us formally apply the procedure above to \eqref{eq:FPcontrol} and \eqref{eq:HJcontrol}.

For \eqref{eq:FPcontrol} one has that
\begin{equation*}
\begin{split}
&\inf\limits_{r,~m(x,0)=m^0(x)} \int\limits_{\Omega_T} L_0(x,-r) m+F(x,m)dxdt+\int\limits_{\Tt^d} u^T(x) m(x,T) dx\\
=&\inf\limits_{m,r} \sup\limits_u \int\limits_{\Omega_T} L_0(x,-r) m+F(x,m)-u\big(m_t-\epsi \Delta m+ \div(m r) \big)dxdt \\
&+\int\limits_{\Tt^d} u^T(x) m(x,T) dx-\int\limits_{\Tt^d}u(x,0)m(x,0)dx+\int\limits_{\Tt^d}m^0(x)u(x,0)dx\\
=&\sup\limits_u \inf\limits_m \inf\limits_r \int\limits_{\Omega_T} L_0(x,-r) m+ m r \nabla u+F(x,m)+m(u_t+\epsi \Delta u)  dxdt \\
&+\int\limits_{\Tt^d} u^T(x) m(x,T) dx-\int\limits_{\Tt^d}u(x,T)m(x,T)dx+\int\limits_{\Tt^d}m^0(x)u(x,0)dx\\
=&\sup\limits_u \inf\limits_m \int\limits_{\Omega_T} -m H_0(x,\nabla u)+F(x,m)+m(u_t+\epsi \Delta u)  dxdt \\
&+\int\limits_{\Tt^d} u^T(x) m(x,T) dx-\int\limits_{\Tt^d}u(x,T)m(x,T)dx+\int\limits_{\Tt^d}m^0(x)u(x,0)dx\\
=&\sup\limits_u \inf\limits_m -\Psi_1(m,u)=- \inf\limits_u \sup\limits_m \Psi_1 (m,u),
\end{split}
\end{equation*}
where $\Psi_1$ is given by \eqref{eq:Psi1_a=0_Hseparated}. Therefore, we arrive at \eqref{eq:gameseparable} where Player 1 makes the first move.

For \eqref{eq:HJcontrol} one has that
\begin{equation*}
\begin{split}
&\inf\limits_{s,~u(x,T)=u^T(x)} \int\limits_{\Omega_T} F^*(x,s)dxdt-\int\limits_{\Tt^d} u(x,0) m^0(x)dx\\
=&\inf\limits_{s,u} \sup \limits_m \int\limits_{\Omega_T} F^*(x,s)+m(-u_t -\epsi \Delta u + H_0\big(x,\nabla u\big)-s)dxdt+\int\limits_{\Tt^d}u(x,T)m(x,T)dx\\
&-\int\limits_{\Tt^d} u^T(x) m(x,T) dx-\int\limits_{\Tt^d} u(x,0) m^0(x)dx\\
&= \sup \limits_m \inf\limits_{u} \inf\limits_{s} \int\limits_{\Omega_T} F^*(x,s)-m s +m\big(-u_t -\epsi \Delta u + H_0(x,\nabla u)\big)dxdt+\int\limits_{\Tt^d}u(x,T)m(x,T)dx\\
&-\int\limits_{\Tt^d} u^T(x) m(x,T) dx-\int\limits_{\Tt^d} u(x,0) m^0(x)dx\\
=& \sup \limits_m \inf\limits_{u} \int\limits_{\Omega_T} F(x,m) +m\big(-u_t -\epsi \Delta u + H_0(x,\nabla u)\big)dxdt+\int\limits_{\Tt^d}u(x,T)m(x,T)dx\\
&-\int\limits_{\Tt^d} u^T(x) m(x,T) dx-\int\limits_{\Tt^d} u(x,0) m^0(x)dx\\
=& \sup \limits_m \inf\limits_{u} \Psi_1(m,u),
\end{split}
\end{equation*}
where $\Psi_1$ is again given by \eqref{eq:Psi1_a=0_Hseparated}. Thus, we obtain \eqref{eq:gameseparable} where Player 2 makes the first move.

In \cite{car'15} and subsequent papers \cite{graber'14,CardaGraber,grames'18,CGPT,MeSil,carmesa'16} the authors considered a modification of \eqref{eq:FPcontrol} in the spirit of \cite{benbren'00}. More precisely, they chose as a control $w =m r$ instead of $r$ and considered the optimization problem
\begin{equation}\label{eq:BrenBencontrol}
\begin{split}
\inf\limits_{m,w} \int\limits_{\Omega_T} L_0\left(x,-\frac{w(x,t)}{m(x,t)}\right) m(x,t)+F(x,m(x,t))dxdt+\int\limits_{\Tt^d} u^T(x) m(x,T) dx,\\
m_t(x,t)-\epsi \Delta m(x,t) + \div (w(x,t))=0,~ m(x,0)=m^0(x),~(x,t) \in \Omega_T.
\end{split}
\end{equation}
The advantage of \eqref{eq:BrenBencontrol} over \eqref{eq:FPcontrol} is that the former is a convex optimization problem in $(m,w)$ with a linear constraint whereas \eqref{eq:FPcontrol} is not jointly convex in $(m,r)$ and \eqref{eq:FPdynamics} is not a jointly linear constraint in $(m,r)$.

\subsection{First-order stationary MFG with congestion}

In \cite{evafegonuvos'18}, the authors observed that under assumptions \eqref{hyp:mon_conv_congestion} and $1<\alpha\leq \gamma$ \eqref{eq:mainstationary_cong} admits a variational formulation
\begin{equation}\label{eq:minJ}
\min\limits_{m,u}J(m,u)=\min\limits_{m,u}\int\limits_{\Tt^d} \frac{m^{\alpha-1}|\nabla u+Q|^\gamma}{(1-\alpha)\gamma}+F(x,m)dx,
\end{equation}
where $F(x,m)=\int\limits^{m} f(x,z)dz$.

We observe that $J(m,u)=-\hat{\Psi}_1(m,u)$, where $\hat{\Psi}_1$ is given by \eqref{eq:hatPsi12_congestion}. Therefore, variational formulation \eqref{eq:minJ} follows from Corollary \ref{crl:Nashstat_congestion_a>1} and the fact that $(m,u) \mapsto \hat{\Psi}_1(m,u),~m>0$ is a concave functional for $1<\alpha \leq \gamma$. 

Additionally the authors observed in \cite{evafegonuvos'18} that $(m,u)\mapsto J(m,u),~m>0$ is not convex when $0<\alpha<1<\gamma$, and hence \eqref{eq:minJ} is not valid. Accordingly, they applied a suitable transformation to $(m,u)$ and obtained a new pair $(m,v)$ that solves a related system of the form \eqref{eq:mainstationary_cong} with parameters $1< \tilde{\alpha}<\tilde{\gamma}$. Consequently, they obtained a convex optimization problem similar to \eqref{eq:minJ} for $(m,v)$ that allowed to find $(m,u)$ by first finding $(m,v)$ and then applying an inverse transformation.

Unfortunately, the technique in \cite{evafegonuvos'18} is valid only in the two-dimensional setting because of the special structure of divergence free vector fields in two-dimensions.

Here, we observe that variational formulation \eqref{eq:brenben_cong_a<1} is the generalization of the one in \cite{evafegonuvos'18} to the higher-dimensional setting. Indeed, if $d=2$, then \eqref{eq:brenben_w_constraint} yields that
\begin{equation*}
w=(\nabla v)^\perp+R^\perp,
\end{equation*}
for some $v:\Tt^2 \mapsto \Rr$ and $R \in \Rr^2$, and $^\perp$ is the rotation by $\pi/2$. Therefore, we have that
\begin{equation*}
\begin{split}
\Phi(m,w)=&\int\limits_{\Tt^d} -\frac{1}{(1-\alpha)} w\cdot Q+\frac{|w|^{\gamma'}}{(1-\alpha)\gamma'm^{(\gamma'-1)(1-\alpha)}}+F(x,m)dx\\
=&\int\limits_{\Tt^d} -\frac{1}{(1-\alpha)} \left((\nabla v)^\perp+R^\perp\right)\cdot Q+\frac{|(\nabla v)^\perp+R^\perp|^{\gamma'}}{(1-\alpha)\gamma'm^{(\gamma'-1)(1-\alpha)}}+F(x,m)dx\\
=&\int\limits_{\Tt^d} -\frac{1}{(1-\alpha)} \left(\nabla v+R\right)\cdot Q^\perp+\frac{|\nabla v+R|^{\gamma'}}{(1-\alpha)\gamma'm^{(\gamma'-1)(1-\alpha)}}+F(x,m)dx\\
=&-\frac{R\cdot Q^\perp}{(1-\alpha)}+\int\limits_{\Tt^d} \frac{|\nabla v+R|^{\gamma'}}{(1-\alpha)\gamma'm^{(\gamma'-1)(1-\alpha)}}+F(x,m)dx.
\end{split}
\end{equation*}
Therefore, \eqref{eq:brenben_cong_a<1} is equivalent to
\begin{equation}\label{eq:brenben_(m,v)}
\inf\limits_{m,v} \left\{ \int\limits_{\Tt^d} \frac{|\nabla v+R|^{\gamma'}}{(1-\alpha)\gamma'm^{(\gamma'-1)(1-\alpha)}}+F(x,m)dx.~\mbox{s.t.}~m>0, \int\limits_{\Tt^d}m=1 \right\}.
\end{equation}
This previous formulation is precisely the one obtained in \cite{evafegonuvos'18} for $(m,v)$. Finally, note that \eqref{eq:brenben_(m,v)} is almost identical to \eqref{eq:minJ}.

\section{Existence of non-trivial periodic solutions to variational MFG}\label{sec:bifu}

In this section, we present some results on the existence of periodic in time solutions that are based on the aforementioned variational structure. We will assume that $H(x, p, m) = \frac{1}{2}|p|^2 - f(m)$, where $f$ is a smooth decreasing function. For simplicity, $\epsi = 1$.

We look for a solution to \eqref{eq:main} such that $m(\cdot, t)$ is defined for all $t \in (-\infty, +\infty)$ and $m(\cdot, t + T) = m(\cdot, t)$ for some $T > 0$ and for all $t$. Periodicity in time of $u$ is more subtle and cannot be expected in general: for Hamilton-Jacobi equations with periodic data one usually looks for quasi-periodic solutions; that is, solutions $\phi$ such that $\phi(\cdot, t + T) = \phi(\cdot, t) + \overline H T,~\forall t$ for some $\overline H \in \Rr$ and period $T$ (see, e. g., \cite{BS, EG}). For such $\phi$, the function $u(\cdot,t)=\phi(\cdot,t)-\Hh t$ is $T$-periodic. Therefore, we search for a triple $(u, \overline H, m)$, where $u, m$ are $T$-periodic in the $t$ variable and solve
\begin{equation*}
\begin{cases}
-u_t - \Delta u + \frac{1}{2}|\nabla u|^2 =f(m) + \bar{H},\\
m_t -\Delta m - \mathrm{div}  \left(\nabla u \, m\right)=0,\\
m > 0,\ \int\limits_{\Tt^d} m(x,0)dx=1.
\end{cases}
\end{equation*}
Note, that for smooth solution of this system the $t\mapsto \int_{\Tt^d}m(x,t)dx$ is a conserved quantity. Therefore, for smooth solutions this previous system is equivalent to
\begin{equation}\label{eq:periodic}
\begin{cases}
-u_t - \Delta u + \frac{1}{2}|\nabla u|^2 =f(m) + \bar{H},\\
m_t -\Delta m - \mathrm{div}  \left(\nabla u \, m\right)=0,\\
m > 0,\ \int\limits_{\Omega_T} m(x,t)dxdt= T.
\end{cases}
\end{equation}
Our first observation is that concerning periodic solutions \eqref{eq:periodic} can be cast as a stationary (ergodic) MFG system rather than a time-dependent one. The only difference with \eqref{eq:mainstationary} is that the suitable Hamiltonian is linear in the gradient variable $u_t$ and the Laplacian is degenerate in the $t$ direction. Indeed, consider the Hamiltonian
\[
\tilde{H}(x,t,p,q,m)=-q+\frac{|p|^2}{2},
\]
and a degenerate diffusion
\[A u(x,t)=\Delta_x u(x,t). 
\]
Then, \eqref{eq:periodic} can be written as
\begin{equation}\label{eq:periodicstat}
\begin{cases}
 - A u + \tilde{H}(x,t,\nabla_x u,u_t) =f(m)+ \bar{H},\\
-A^*m  - \mathrm{div}_{x,t}  \left(\nabla_{p,q} \tilde{H}(x,t,\nabla_x u,u_t)  m\right)=0,\\
m > 0,\ \int\limits_{\Omega_T} m(x,t)dxdt= T.
\end{cases}
\end{equation}
Furthermore, as we observed before, this previous system can be fitted into a variational framework. Indeed, \eqref{eq:periodicstat} has precisely the same analytic structure as \eqref{eq:mainstationary}. Therefore, from Corollary \ref{crl:Nashstat_a=0_HseparatedLagrange} we have that \eqref{eq:periodicstat} can be cast as
\begin{equation}\label{eq:periodiccritical}
\begin{cases}
\frac{\delta \tilde{\Psi}_1}{\delta m}(u,m,\Hh)=0,\\
\frac{\delta \tilde{\Psi}_1}{\delta u}(u,m,\Hh)=0,\\
\frac{\delta \tilde{\Psi}_1}{\delta \Hh}(u,m,\Hh)=0,
\end{cases}
\end{equation}
where
\begin{equation}\label{eq:tildePsi1periodic}
\tilde{\Psi}_1(u,m,\Hh)=\int\limits_{\Omega_T}  - m~ A u+m \tilde{H}(x,t,\nabla_x u,u_t)-F(m)+\Hh(T-m)dx,
\end{equation}
and $F(m)=\int^m f(z)dz$. The fact that \eqref{eq:periodic} can be written as \eqref{eq:periodiccritical} is crucial in our analysis. To take advantage of bifurcation methods, we perform a change of variables as follows:
\[
\begin{cases}
U(x,t) := u(x,T t) + t T f(1),  \\
M(x,t) := m(x,T t) -1.
\end{cases}
\]
Then, $U$ and $M$ are functions over $Q = \Tt^{d+1}$ and solve 
\begin{equation}\label{eq:periodic2}
\begin{cases}
-\frac{1}{T} U_t - \Delta U + \frac{1}{2}|\nabla U|^2 =f(M + 1) - f(1) + \bar{H},\\
\frac{1}{T} M_t -\Delta M - \Delta U - \mathrm{div}  \left(\nabla U \, M\right)=0,\\
M > 0,\ \int\limits_{Q} M dx dt = \int\limits_{Q} U dx dt=0.
\end{cases}
\end{equation}
We require $\int_{Q} U dx dt=0$ since $U$ in the Hamilton-Jacobi equation is defined up to an addition of constants. Note that $T$ itself should be regarded as an unknown of the problem.

As \eqref{eq:periodicstat}, the system \eqref{eq:periodic2} can also be cast as an equation for critical points of a suitable functional, $g$, that we obtain below by expressing $\tilde{\Psi}_1$ in \eqref{eq:tildePsi1periodic} in $(U,M,\Hh)$ variables. First, \eqref{eq:periodic2} can be restated in terms of the zero-locus of a suitable functional $G$. Let $\alpha \in (0, 1)$, $X$ be the Banach space
\[
X := \left\{ (U, M) \in C^{4 + \alpha, 2 + \alpha/2}(Q)^2 : \int_{Q} U dx dt=0 \right\} \times \Rr
\]
and $G : X \times \Rr \to C^{2 + \alpha, 1 + \alpha/2}(Q)^2 \times \Rr$ by
\begin{multline*}
G(U, M, \overline H, T) = \left(\frac{1}{T} M_t - \Delta M - \Delta U - \diverg(\nabla U M),\right.\\ \left. -\frac{1}{T} U_t - \Delta U + \frac{1}{2}|\nabla U|^2 - f(M+1) + f(1) + \lambda, \int_Q M dx dt \right).
\end{multline*}
Note that $ \int_{Q} G_1(U, M, \overline H, T) dx dt = 0$ for all $(U, M, \overline H, T)$. Moreover,
\[
G(0, 0, 0, T) = 0 \qquad \text{for all $T> 0$,}
\]
that is, $G$ has a trivial solution for all $T > 0$. The change of variables is indeed designed for the trivial solution to be identically zero, to apply bifurcation theory.

In the rest of this section, we will aim at proving the following
\begin{theorem}\label{thm:bifu} Suppose that $-8\pi^2<f'(1)<-4\pi^2$ and let
\[
\overline T = \frac{1}{\sqrt{-4\pi^2 - f'(1)}}
\]
Then, $(0, 0, 0, \overline T)$ is a bifurcation point for the equation $G(U, M, \overline H, T) = 0$. In other words, there exists a sequence of non-trivial solutions $(U_n, M_n, \overline H_n, T_n) \subset X \times \Rr$ to \eqref{eq:periodic2} such that $(U_n, M_n, \overline H_n, T_n) \to (0, 0, 0, \overline T)$ as $n \to \infty$.
\end{theorem}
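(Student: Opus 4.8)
The plan is to prove that $(0,0,0,\overline T)$ is a bifurcation point by analysing the linearisation of $G$ along the trivial branch $\{(0,0,0,T):T>0\}$, with $T$ playing the role of the bifurcation parameter. First I would linearise \eqref{eq:periodic2} at the trivial solution: the nonlinearities $\tfrac12|\nabla U|^2$ and $\diverg(\nabla U\,M)$ are quadratic, hence invisible to the first variation, and the Fréchet derivative $L_T:=D_{(U,M,\Hh)}G(0,0,0,T)$ sends a perturbation $(\phi,\psi,h)$ to
\[
L_T(\phi,\psi,h)=\left(\tfrac1T\psi_t-\Delta\psi-\Delta\phi,\ -\tfrac1T\phi_t-\Delta\phi-f'(1)\psi+h,\ \int_Q\psi\,dx\,dt\right).
\]
The decisive structural fact, inherited from Corollary~\ref{crl:Nashstat_a=0_HseparatedLagrange} via the functional \eqref{eq:tildePsi1periodic}, is that $G$ is the gradient of $g=\tilde\Psi_1$ in the variables $(U,M,\Hh)$. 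Concretely, the principal $(\phi,\psi)$-block of $L_T$ is self-adjoint on $L^2(Q)^2$, since $\tfrac1T\partial_t-\Delta$ and $-\tfrac1T\partial_t-\Delta$ are formal adjoints; the bordering by the multiplier $h$ and the constraint functional $\int_Q\psi$ preserves this symmetry, so $L_T$ is self-adjoint. I would verify in addition that $L_T$ is Fredholm of index zero between the Banach spaces introduced above, and that its zero Fourier mode is nondegenerate (the constraint $\int_Q U=0$ removes $\hat\phi_{0,0}$, while the equations coupling $\hat\psi_{0,0}$ and $h$ are uniquely solvable).

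Next I would diagonalise $L_T$ by Fourier series on $Q=\Tt^{d+1}$. On the mode $e^{2\pi i(k\cdot x+nt)}$ with $k\in\Zz^d$, $n\in\Zz$, the operator restricts to a $2\times2$ Hermitian matrix in $(\hat\phi,\hat\psi)$ whose determinant equals $-4\pi^2|k|^2f'(1)-16\pi^4|k|^4-4\pi^2n^2/T^2$. Thus $0$ is an eigenvalue of $L_T$ exactly when some mode satisfies $n^2/T^2=-|k|^2\bigl(f'(1)+4\pi^2|k|^2\bigr)$. Under the hypothesis $-8\pi^2<f'(1)<-4\pi^2$ the right-hand side is positive only for $|k|^2=1$, where it equals $-f'(1)-4\pi^2$; matching $|n|=1$ then forces $T=\overline T=(-4\pi^2-f'(1))^{-1/2}$, and no mode with $|k|^2\ge2$ can contribute (this is precisely where both strict inequalities on $f'(1)$ are used). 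Consequently $\ker L_{\overline T}$ is spanned by the finitely many modes $k=\pm e_i$, $n=\pm1$, and, after imposing that $U,M$ be real, has the even dimension $4d$, while $L_T$ is invertible for $T$ in a punctured neighbourhood of $\overline T$.

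Because $\ker L_{\overline T}$ is even-dimensional, the Crandall--Rabinowitz theorem does not apply; this is exactly the obstruction that the variational structure is meant to bypass. Since on every critical block the trace $4\pi^2|k|^2-f'(1)>0$ is positive, the sign of the determinant equals the sign of the product of the two eigenvalues, and $\tfrac{d}{dT}\det=8\pi^2n^2/T^3>0$ at $T=\overline T$ shows that, as $T$ increases through $\overline T$, one eigenvalue in each critical block crosses zero transversally from negative to positive. Hence the Morse index of the self-adjoint Hessian $L_T$ drops by $4d$ across $\overline T$. I would then invoke the bifurcation theorem for potential operators, Theorem~\ref{bifu}: its hypotheses---a $C^2$ family of gradient maps with a trivial branch along which the Hessian is self-adjoint Fredholm, together with a transversal eigenvalue crossing (equivalently a jump of the Morse index) at $\overline T$---are met, and its conclusion is exactly the existence of a sequence $(U_n,M_n,\Hh_n,T_n)\subset X\times\Rr$ of nontrivial solutions of \eqref{eq:periodic2} converging to $(0,0,0,\overline T)$.

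The genuine difficulty I anticipate is not the spectral computation but the functional-analytic bookkeeping required to make the potential-operator machinery applicable in this degenerate-parabolic setting. The diffusion acts only in $x$, so $G$ lives naturally between the anisotropic Hölder spaces $C^{4+\alpha,2+\alpha/2}(Q)$ and $C^{2+\alpha,1+\alpha/2}(Q)$, whereas the self-adjointness and Morse-index arguments are most transparent in a Hilbert space. Reconciling the two---realising $L_T$ as a self-adjoint Fredholm operator of index zero for which $0$ is an isolated eigenvalue of finite multiplicity at $T=\overline T$, checking that the quadratic nonlinearities are smooth maps between the chosen spaces, and confirming that the constraint together with the Lagrange multiplier $\Hh$ renders the full linearisation Fredholm with its zero mode nondegenerate---is where the real work lies, and it is the variational structure that ultimately converts the even-dimensional kernel from an obstruction into a source of bifurcation.
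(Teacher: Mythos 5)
Your proposal is correct and follows essentially the same route as the paper: Fourier diagonalization of the linearization, identification of a $4d$-dimensional kernel at $T=\overline T$ with Fredholm index zero, a transversal eigenvalue crossing yielding a nonzero crossing number $4d$, and an appeal to Kielh\"ofer's bifurcation theorem for potential operators (Theorem \ref{bifu}). The only differences are presentational --- you track the sign of the determinant of the $2\times 2$ Hermitian Fourier blocks where the paper computes the perturbed eigenvalue branch $\sigma(T)$ via the implicit function theorem and checks $\sigma'(\overline T)>0$ --- and the functional-analytic points you flag as remaining work are treated at the same level of detail in the paper itself.
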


Going back to the original unknowns, Theorem \ref{thm:bifu} states that there exists a sequence of non-trivial solutions $(u_n, \overline H_n, m_n)$ to \eqref{eq:periodic} that is $T_n$-periodic, that is, couples $(u_n, m_n)$ solving \eqref{eq:main} such that $m_n$ is $T_n$-periodic.

Let us start by some comments on the functional setting. Since $C^{4 + \alpha, 2 + \alpha/2}(Q) \subset L^2(Q)$, we can consider a scalar product on $X$ defined by $\langle (U_1, M_1, \overline H_1), (U_2, M_2, \overline H_2) \rangle =  \int_Q M_1 M_2 + U_1 U_2 dx dt + \overline H_1 \overline H_2$. Passing from $(u,m,\Hh)$ variables to $(U,M,\Hh)$ in \eqref{eq:tildePsi1periodic} we obtain a functional $g : X \times \Rr \to \Rr$ given by
\begin{multline*}
g(U, M, \overline H, T) = \\ \int_Q -\frac{U_t M}{T} + \nabla U \cdot \nabla M + \frac{1}{2}|\nabla U|^2 (M+1) -F(M+1) + f(1) M +\overline H M \, dx dt.
\end{multline*}
It is quite standard to prove that $g$ is differentiable (when $T > 0$). Then, $G$ is a {\it potential} operator in the following sense:
\[
D_{(U, M, 	\overline H)} g(U, M, 	\overline H, T)[v, \mu, \ell] = \langle G(U, M, 	\overline H, T) , (v, \mu, \ell) \rangle \qquad \forall (v, \mu, \ell),
\]
the equality following by integration by parts.

A crucial role will be played by the properties of the linearized problem $D G = 0$. We compute the Fr\'echet derivative of $G$,
\begin{multline*}
D_{(U, M, 	\overline H)} G(U, M, \overline H,T)[v, \mu, \ell] =  \left(\frac{1}{T} \mu_t - \Delta \mu - \Delta v - \diverg(\nabla U \mu + M \nabla v ),\right.\\ \left. -\frac{1}{T} v_t - \Delta v + \nabla U \cdot  \nabla v - f'(M+1)\mu + \ell, \int_Q \mu dx dt \right).
\end{multline*}
Evaluating it at the trivial solution $(0, 0, 0)$ gives the linear operator $A(T)$
\begin{multline*}
A(T)[v, \mu, \ell] := D_{(U, M, 	\overline H)} G(0, 0, 0, T)[v, \mu, \ell] \\ =  \left(\frac{1}{T} \mu_t - \Delta \mu - \Delta v, -\frac{1}{T} v_t - \Delta v  - f'(1)\mu + \ell, \int_Q \mu dx dt \right).
\end{multline*}

\begin{lemma}\label{linearized} Zero is an (isolated) eigenvalue of $A(\overline T)$ with multiplicity $4d$. Moreover,  $A(\overline T)$ is a Fredholm operator of index zero\footnote{That is, $A(\overline T)$ has kernel and cokernel of same finite dimension.}.
\end{lemma}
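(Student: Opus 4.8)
The plan is to exploit the fact that, after linearizing at the trivial solution, $A(\overline T)$ has \emph{constant coefficients} on the torus $Q = \Tt^{d+1}$, so it is completely diagonalized by the Fourier basis $e^{2\pi i(k\cdot x + nt)}$, $(k,n)\in\Zz^d\times\Zz$. I would therefore reduce the kernel and cokernel computations to a family of finite-dimensional linear-algebra problems indexed by the frequencies, extract the multiplicity $4d$ by counting the singular modes (with the reality constraint), and then promote these mode-wise facts to the Fredholm statement using parabolic Schauder estimates together with the self-adjoint (potential) structure of $G$. The spatial Laplacian acts on the $(k,n)$ mode as multiplication by $-4\pi^2|k|^2$ and $\partial_t$ as multiplication by $2\pi i n$; writing $a = 4\pi^2|k|^2$ and $b = 2\pi n/\overline T$, the two differential components of $A(\overline T)$ collapse, on each mode, to the Hermitian matrix
\[
\mathcal M(k,n) = \begin{pmatrix} a & a + i b \\ a - i b & -f'(1) \end{pmatrix}
\]
acting on $(\hat v_{k,n},\hat\mu_{k,n})$, while the scalar unknown $\ell$ and the output $\int_Q\mu\,dx\,dt$ only couple to the mode $(0,0)$.

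For the kernel, note $\det\mathcal M(k,n) = -a f'(1) - a^2 - b^2$, so a mode is singular iff $a^2 + a f'(1) + b^2 = 0$. Setting $c := -f'(1)$, the hypothesis $4\pi^2 < c < 8\pi^2$ and $a = 4\pi^2|k|^2$ force $b^2 = a(c-a)\ge 0$, hence $0 < a \le c < 8\pi^2$, which leaves only $|k|^2 = 1$, i.e. $a = 4\pi^2$ and $b^2 = 4\pi^2(c-4\pi^2)$; at $T=\overline T=(c-4\pi^2)^{-1/2}$ this is precisely $n^2=1$. The $(0,0)$ mode together with the constraints $\int_Q v = 0$ and $\int_Q\mu=0$ then forces $\ell = 0$ and removes the constant modes, so the surviving frequencies are exactly $\{\,|k|^2=1,\ n=\pm1\,\}$: there are $2d\cdot 2 = 4d$ of them, and on each one $\mathcal M$ (being nonzero, since its $(2,2)$ entry is $c\ne 0$) has rank one, giving a one–complex–dimensional kernel. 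These $4d$ frequencies form $2d$ conjugate pairs, and since $\mathcal M(-k,-n)=\overline{\mathcal M(k,n)}$ the reality condition $\hat{\,\cdot\,}_{-k,-n}=\overline{\hat{\,\cdot\,}_{k,n}}$ is consistent and leaves one complex, i.e. two real, parameters per pair. Thus $\dim_{\Rr}\ker A(\overline T) = 4d$, which is the claimed multiplicity.

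For the Fredholm property and isolation I would argue as follows. Under the parabolic scaling $t\sim x^2$ one has $b\sim a$ at high frequency, so $\det\mathcal M$ is homogeneous of order four and vanishes only at the frequency origin; hence the coupled pair $(G_1,G_2)$ is parabolic in the Petrowsky sense, even though no second-order term acts in $t$. Parabolic Schauder theory on $Q$ then yields an a priori estimate of the form
\[
\|(v,\mu)\|_{C^{4+\alpha,2+\alpha/2}} \le C\bigl(\|A(\overline T)(v,\mu,\ell)\|_{C^{2+\alpha,1+\alpha/2}} + \|(v,\mu)\|_{\infty} + |\ell|\bigr),
\]
from which $A(\overline T)$ has finite-dimensional kernel and closed range; compactness of the lower-order terms upgrades this to Fredholmness. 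Since $G$ is a \emph{potential} operator — its linearization is symmetric for the stated $L^2$ inner product, with $G_1$ valued in mean-zero functions (dual to the constraint $\int_Q U=0$) and $G_3=\int_Q\mu$ dual to $\overline H$ — the operator $A(\overline T)$ is self-adjoint on this space, so $\operatorname{coker}A(\overline T)\cong\ker A(\overline T)$ and the index is zero. The same self-adjoint realization has compact resolvent, hence discrete real spectrum; explicitly the mode eigenvalues satisfy $\lambda_\pm(k,n)\to\pm\infty$, so only finitely many modes lie near $0$, and $0$ is an isolated eigenvalue of (finite) multiplicity equal to $\dim\ker=4d$.

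The hard part will be the analytic passage from the clean frequency-wise invertibility of $\mathcal M(k,n)$ (for all but the $4d$ exceptional modes) to a genuine Schauder a priori estimate and closed range in the \emph{anisotropic} Hölder spaces. Because the system is degenerate — there is no diffusion in the $t$ direction — it is not uniformly parabolic in the naive sense, and the estimate must be extracted from the true Petrowsky parabolicity of the coupled system, i.e. from the ellipticity of the fourth-order symbol $\det\mathcal M$ under the scaling $t\sim x^2$. Carrying the scalar unknown $\ell$ and the nonlocal constraints $\int_Q v = \int_Q\mu = 0$ consistently through this framework is the remaining bookkeeping, and it is exactly what makes the index land at zero rather than at $-1$.
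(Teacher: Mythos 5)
Your computation of the kernel is correct and is essentially the paper's argument in a slightly different guise: the paper expands only in the spatial eigenfunctions $\psi_k$ and solves the resulting $2\times 2$ periodic ODE systems $\mu_k''=\overline T^2\lambda_k(\lambda_k+f'(1))\mu_k$ explicitly, finding a two-dimensional solution space for each of the $2d$ modes with $\lambda_k=4\pi^2$ and nothing else, while you fully diagonalize in space-time and count the $4d$ singular frequencies of the Hermitian symbol $\mathcal M(k,n)$; the two counts agree, and your handling of the $(0,0)$ mode, of $\ell$, and of the reality constraint is right. Where you genuinely diverge is the cokernel/index-zero step. The paper does this by brute force: it writes $A(\overline T)[v,\mu,\ell]=(\omega,\nu,a)$, solves mode by mode, and applies the Fredholm alternative on the $2d$ resonant spatial modes to exhibit exactly $4d$ solvability conditions, so kernel and cokernel dimensions are matched by direct inspection. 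You instead appeal to the symmetry of $A(\overline T)$ in the $L^2$ pairing (which is correct --- it is the Hessian of $g$, and your $\mathcal M(k,n)$ is visibly Hermitian) to get $\operatorname{coker}\cong\ker$, and to anisotropic (Petrowsky-parabolic) Schauder estimates to get closed range. The symmetry observation is a clean shortcut, but the closed-range step is the real content of your route and you have only sketched it; invoking general Schauder theory for this degenerate coupled system is both harder and unnecessary here, since the operator has constant coefficients and closed range follows from the uniform invertibility of $\mathcal M(k,n)$ (with inverse bounded by the right powers of $|k|,|n|$) away from the $4d$ resonant frequencies --- which is exactly the computation the paper performs. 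Finally, you are right to flag the codomain bookkeeping: since $\int_Q G_1\,dx\,dt\equiv 0$, the first component of the target must be taken mean-zero (as the paper does implicitly when it ``assumes'' $\int_Q\omega=0$), otherwise the index would be $-1$; making that restriction explicit is needed in either approach.
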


To prove Lemma \ref{linearized}, we perform an analysis on the Fourier coefficients of $v, \mu$: let us denote by  $(\lambda_k)_{k \ge 0}$ the non-decreasing sequence  of eigenvalues of $-\Delta$ (on $\Tt^N$), with corresponding eigenvectors $\psi_k \in C^{\infty}(\Tt^N)$; let $(\psi_k)_{k \ge 0}$ be renormalized such that it constitutes an orthonormal basis of $L^2(\Tt^N)$. Note that the first eigenvalue $\lambda_0$ is zero, with associated constant eigenfunction $\psi_0 \equiv 1$. Non-zero eigenvalues can be expressed in the form $\lambda_{k_1k_2\cdots k_N}=4\pi^2\sum_{i=1}^{N}k_i^2$, where $k_i$ are nonnegative integers. Therefore, $\lambda_1= \lambda_2 = \ldots = \lambda_{2d} = 4 \pi^2 < \lambda_{2d + 1}$. 

We may represent $v, \mu$ as follows:
\[
\mu(x,t) = \sum_{k \ge 0} \mu_{k}(t) \psi_k(x), \qquad v(x,t) = \sum_{k \ge 0} v_{k}(t) \psi_k(x),
\]
where $v_k, \mu_k$ is a family of (time) periodic functions of class $C^{2+\alpha/2}(\Tt)$. In general, we will refer to the Fourier coefficients by using the subscript $k$, i.e. $f_k(\cdot) := \int_{\Tt^N} f(x,\cdot) \psi_k(x) dx$. 

\begin{proof}[Proof of Lemma \ref{linearized}] Consider the equation $A(\overline T)[v, \mu, \ell] = 0$ in terms of Fourier coefficients. The 0-th Fourier coefficient of $A(\overline T)[v,\mu,l]$ yields the following system.
\begin{equation*}
\begin{cases}
\mu_0'=0,\\
-v_0'- \overline Tf'(1)\mu_0+T \ell=0,\\
\int_{\Tt} \mu_0(t)dt=0.
\end{cases}
\end{equation*}
By periodicity, we obtain $\mu_0 \equiv 0$. Furthermore, $\ell=0$, and $v_0\equiv 0$ since $\int_{\Tt}v_0(t)dt = \int_Q v(x,t)dxdt=0$. On the other hand, for all $k \ge 1$ we have
\[
\begin{cases}
\mu_k' + \overline T\lambda_k \mu_k + \overline T\lambda_k v_k = 0, \\
- v_k'  + \overline T\lambda_k v_k - \overline T f'(1) \mu_k = 0
\end{cases}
\]
This system of ODEs is equivalent to
\begin{equation}\label{ode}
\begin{cases}
\mu_k'' = \overline T^2 \lambda_k (\lambda_k + f'(1)) \mu_k,\\
\overline T\lambda_k v_k = - \mu_k' - \overline T\lambda_k \mu_k.
\end{cases}
\end{equation}

By the standing assumptions, for all $k > 2d$ we have $\lambda_k + f'(1) \ge \lambda_{2d + 1} + f'(1) = 8\pi^2 + f'(1) > 0$, so $\mu_k \equiv 0$ (and in turn $v_k \equiv 0$). Finally, since $\overline T^2 \lambda_k (\lambda_k + f'(1)) = 4 \pi^2$ when $k = 1, \ldots, 2d$, solutions $(\mu_k, v_k)$ to \eqref{ode} consist of a two-dimensional vector space spanned by
\[
\begin{split}
&\left(\cos(2\pi t), \frac{\sqrt{-4\pi^2 - f'(1)}}{2\pi} \sin(2\pi t) - \cos(2\pi t)\right), \\
&\left(\sin(2\pi t), -\frac{\sqrt{-4\pi^2 - f'(1)}}{2\pi} \cos(2\pi t) - \sin(2\pi t)\right).
\end{split}
\]
Therefore, we have that $A(\overline T)$ has a non-trivial kernel of dimension $4d$.

To prove that cokernel $A(\overline T)$ has the same dimension, we proceed analogously. For any given triple $(\omega, \nu, a)$ (note that $\omega$ can be assumed such that $\int_Q \omega = 0$), the equation $A(\overline T)[v, \mu, \ell] = (\omega, \nu, a)$ reads as follows. For $k = 0$,
\begin{equation*}
\begin{cases}
\mu_0'=\omega_0,\\
-v_0'- \overline Tf'(1)\mu_0+ \overline T \ell= \nu_0,\\
\int_{\Tt} \mu_0(t)dt=a.
\end{cases}
\end{equation*}
The first equation determines $\mu_0$, while the other two ones determine first $\ell$ by integration on $\Tt$ and then $v_0$, recalling that $\int_{\Tt} v_0  = 0$. For all $k > 2d$,
\[
\begin{cases}
-\mu_k'' + \overline T^2 \lambda_k (\lambda_k + f'(1)) \mu_k = \omega_k' - \overline T \lambda_k(\omega_k + \overline T \ell - \nu_k),\\
\overline T\lambda_k v_k = \omega_k - \mu_k' - \overline T\lambda_k \mu_k.
\end{cases}
\]
has uniquely determined solutions $\mu_k, v_k$. For $k = 1, \ldots, 2d$, we have to argue via Fredholm alternative: $\mu_1$ (and therefore $v_1$) if and only if $\omega_k' - \overline T \lambda_k(\omega_k - \nu_k)$ is $L^2(\Tt)$-orthogonal to $\cos(2\pi \cdot)$ and $\sin(2\pi \cdot)$. Hence, a solution $[v, \mu, \ell]$ exists if and only if $\omega' - \overline T \lambda_k(\omega - \nu)$ is $L^2(Q)$-orthogonal to $\cos(2\pi t) \psi_k(x)$ and $\sin(2\pi t)\psi_k(x)$ for all $k = 1, \ldots, 2d$.
\end{proof}

To prove Theorem \ref{thm:bifu}, we need to define the {\it crossing number} of $A(T)$ through $0$ at $T = \overline T$. First, for any $T > 0$, denote by $\sigma^<(T)$ the sum of the multiplicities of all perturbed eigenvalues of $A(T)$ near $0$ on the negative real axis. For some $\delta > 0$, $\sigma^<(T)$ is constant for all $T \in (\overline T-\delta, \overline T)$ and for all $T \in (\overline T, \overline T + \delta)$. The crossing number is then defined by \[\chi(A(T), \overline T) = \sigma^<(\overline T- \epsilon) - \sigma^<(\overline T +  \epsilon), \qquad  0 < \epsilon < \delta.\]

\begin{proof}[Proof of Theorem \ref{thm:bifu}] We will apply the following result that can be found in \cite{Kiel88} or \cite[Theorem II.7.3]{Kiel}.
\begin{theorem}\label{bifu} Assume that $A(\overline T)$ is a Fredholm operator of index zero having an isolated eigenvalue zero. If $\chi(A(T), \overline T)$ is nonzero, then $(0, 0, 0, \overline T)$ is a bifurcation point for the equation $G(u, m, \lambda, T) = 0$. \end{theorem}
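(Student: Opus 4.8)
The plan is to prove Theorem~\ref{bifu} by a Lyapunov--Schmidt reduction to a finite-dimensional \emph{gradient} equation, followed by a variational (Conley-index) argument; the latter is exactly what makes a nonzero---rather than merely odd---crossing number suffice. Throughout write $z=(U,M,\overline H)\in X$ and regard $T$ as the bifurcation parameter, so that $G(z,T)=0$, $G(0,T)=0$, and $A(T)=D_zG(0,T)$. Because $G$ is a potential operator, $\langle G(\cdot,T),\cdot\rangle=D_zg(\cdot,T)$, the linearization $A(T)$ is (formally) symmetric with respect to $\langle\cdot,\cdot\rangle$; together with the Fredholm-index-zero hypothesis this yields, via elliptic regularity, the $L^2$-orthogonal splitting $\mathrm{range}\,A(\overline T)=N^\perp$, where $N=\ker A(\overline T)$ and $\dim N=:n<\infty$.

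First I would carry out the reduction. Let $P$ be the $\langle\cdot,\cdot\rangle$-orthogonal projection onto $N$ and $Q=I-P$. Writing $z=v+w$ with $v=Pz\in N$ and $w\in N^\perp$, the equation $G=0$ splits into the auxiliary equation $QG(v+w,T)=0$ and the bifurcation equation $PG(v+w,T)=0$. Since $QA(\overline T)|_{N^\perp}$ is an isomorphism onto $\mathrm{range}\,A(\overline T)$, the implicit function theorem solves the auxiliary equation for $w=w(v,T)$ near $(0,\overline T)$ with $w(0,T)=0$, leaving the reduced equation $\Phi(v,T):=PG(v+w(v,T),T)=0$ on $N\times\Rr$ with $\Phi(0,T)=0$. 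The decisive structural point is that $\Phi$ inherits a potential: setting $\tilde g(v,T):=g(v+w(v,T),T)$, the chain rule gives $D_v\tilde g(v,T)[\eta]=\langle G(v+w,T),\eta+D_vw[\eta]\rangle$, and on the reduced set $G(v+w,T)=PG(v+w,T)\in N$ is orthogonal to $D_vw[\eta]\in N^\perp$, whence $\nabla_v\tilde g=\Phi$. Thus $v=0$ is, for every $T$, a critical point of the finite-dimensional function $\tilde g(\cdot,T)$.

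Next I would identify the crossing number with a Morse-index jump. By Kato's analytic perturbation theory the eigenvalues of $A(T)$ that are near $0$ vary continuously in $T$, while those bounded away from $0$ keep their sign for $T$ close to $\overline T$; matching the former with the spectrum of the reduced Hessian $\mathcal H(T):=D^2_v\tilde g(0,T)$ on $N\cong\Rr^n$ shows that the number $m_-(T)$ of negative eigenvalues of $\mathcal H(T)$ equals $\sigma^<(T)$ for $0<|T-\overline T|<\delta$. Consequently $\chi(A(T),\overline T)=m_-(\overline T-\epsilon)-m_-(\overline T+\epsilon)$, so the hypothesis $\chi\neq0$ says exactly that the Morse index of the critical point $0$ of $\tilde g(\cdot,T)$ changes as $T$ crosses $\overline T$.

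It then remains to prove the finite-dimensional variational bifurcation statement, and this is the main obstacle. Arguing by contradiction, suppose $(0,\overline T)$ is not a bifurcation point; then for $T\in(\overline T-\delta,\overline T+\delta)$ the only zero of $\nabla_v\tilde g(\cdot,T)$ in a ball $B_r(0)\subset N$ is $v=0$, so $\{0\}$ is an isolated invariant set of the negative gradient flow $\dot v=-\nabla_v\tilde g(v,T)$ for all such $T$. By the continuation (homotopy) invariance of the Conley index, the index of $\{0\}$ is independent of $T$ on this interval. But for $T=\overline T\pm\epsilon$ the point $0$ is a hyperbolic rest point of Morse index $m_-(\overline T\pm\epsilon)$, whose Conley index is the pointed sphere $S^{m_-(\overline T\pm\epsilon)}$; since $\chi\neq0$ these differ, a contradiction. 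Hence nontrivial zeros $v_n\to0$ of $\Phi(\cdot,T_n)$ with $T_n\to\overline T$ exist, and $z_n=v_n+w(v_n,T_n)$ furnishes the desired sequence of nontrivial solutions of $G=0$ converging to $(0,\overline T)$. The reason this step is delicate---and why the potential hypothesis is indispensable---is that for a general operator one controls only the Leray--Schauder/Brouwer degree, which changes across $\overline T$ precisely when the crossing number is \emph{odd}; an even nonzero crossing produces no net degree change and is invisible to degree theory, so one genuinely needs the finer $\mathbb Z$-graded invariant (the Conley index, equivalently the Morse critical groups or a gradient-equivariant degree) to detect bifurcation. The technical core is therefore verifying that $\{0\}$ is uniformly isolated in $T$ and that the index computation at the hyperbolic endpoints is legitimate; a secondary care point is the perturbation argument above, ensuring that $\mathcal H(T)$ faithfully tracks the eigenvalues of $A(T)$ crossing zero.
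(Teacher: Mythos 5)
Your proof is correct and takes essentially the same route as the source the paper relies on for this statement: the paper does not prove Theorem \ref{bifu} itself but quotes it from \cite{Kiel88} and \cite[Theorem II.7.3]{Kiel}, and the remark following Theorem \ref{thm:bifu} sketches exactly the structure you reconstruct --- a Lyapunov--Schmidt reduction in which the bifurcation equation inherits the gradient structure (your computation $\nabla_v \tilde g = \Phi$), followed by a Conley-index continuation argument for the finite-dimensional flow $\dot{\Vv} = PG(\Vv + \psi(\Vv,T),T)$. Your identification of the crossing number with the jump in the Morse index of the reduced Hessian, and your closing observation that Brouwer/Leray--Schauder degree only detects odd crossing numbers (which is precisely why the potential structure is needed here, where the eigenspaces have even dimension $4d$), match the substance of Kielh\"ofer's argument.
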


The assumptions of this theorem are satisfied in view of Lemma \ref{linearized}. Let us compute the eigenvalues $\sigma = \sigma(T)$ of $A(T)$ close to zero for $T$ close to $\overline T$, with their multiplicities. Consider the equation $A(T)[v, \mu, \ell] = \sigma(v, \mu, \ell)$ in terms of the Fourier expansion. For the 0-th order term we get the following system.
\begin{equation*}
\begin{cases}
\mu_0'=\sigma v_0,\\
-v_0'-Tf'(1)\mu_0+T \ell=\sigma \mu_0,\\
\int_{\Tt}\mu_0(t)dt=\sigma \ell.
\end{cases}
\end{equation*}
Integrating the second equation over $\Tt$ we get
\[-Tf'(1)\sigma \ell +T \ell = \sigma^2 \ell,\quad \mbox{or}\quad \ell(T-\sigma T f'(1)-\sigma^2)=0.
\]
Therefore, if $\sigma$ is small we get that $\ell=0$. Thus, we arrive at
\[
\begin{cases}
\mu_0'=\sigma v_0,\\
-v_0'-Tf'(1)\mu_0=\sigma \mu_0,\\
\int_{\Tt}\mu_0(t)dt=0.
\end{cases}
\]
But then, we get that
\[\begin{cases}
\mu_0''+\sigma(\sigma+Tf'(1))\mu_0=0,\\
\int_{\Tt}\mu_0=0.
\end{cases}
\]
Therefore, if $\mu_0 \neq 0$ we get that $\sigma(\sigma+Tf'(1))=4 \pi^2 n^2$ for some $n \geq 1$ which is impossible when $\sigma$ is small. Hence, for $\sigma$ small enough we have that $\mu_0\equiv 0$. Latter, in turn, yields $v_0\equiv 0$.

For all $k \ge 1$
\[
\begin{cases}
\mu_k' + T\lambda_k \mu_k + T\lambda_k v_k = \sigma v_k, \\
- v_k'  + T\lambda_k v_k - T f'(1) \mu_k = \sigma \mu_k
\end{cases}
\]
that is equivalent to
\begin{equation}\label{mueq}
\mu_k'' = [T^2 \lambda_k(\lambda_k +  f'(1)) + \sigma T( \lambda_k -  f'(1)) - \sigma^2] \mu_k.
\end{equation}
As before, we have that $\mu_k\equiv 0$ for all $k\geq 2d+1$ for $\sigma$ small enough, since $T^2 \lambda_k(\lambda_k +  f'(1))$ is positive and bounded away from zero for all $T$ in a neighbourhood of $\overline T$. Therefore, we need to consider the equation for $1\leq k \leq 2d$. 

Note that $\lambda_k = \lambda_1$. Let \[h(T, \sigma) := [T^2 \lambda_1(\lambda_1 +  f'(1)) + \sigma T( \lambda_1 -  f'(1)) - \sigma^2] + 4 \pi^2.\]
We have $h(\overline T, 0)  = 0$ and $\partial_\sigma h(\overline T, 0)  = \overline T( \lambda_1 -  f'(1)) > 0$, so by the Implicit Function Theorem, there exists a $C^1$ function $\sigma(\cdot)$ defined in a neighborhood of $\overline T$ such that $h(T, \sigma(T)) = 0$. One can also verify that $\sigma'(\overline T) > 0$. Indeed, we have that
\[\sigma'(\overline T)=-\frac{\partial_T h(\overline T,0)}{\partial_\sigma h(\overline T,0)}=\frac{2 \lambda_1(-\lambda_1-f'(1))}{\lambda_1-f'(1)}>0.
\]

Whenever $h(T, \sigma(T)) = 0$, \eqref{mueq} has non-trivial solutions of the form
\[
\mu_k(t) = A \cos(2\pi t) + B \sin(2\pi t), \qquad A, B \in \Rr.
\]
In other words, $\sigma(T)$ is (the only) eigenvalue in a neighbourhood of zero of $A(T)$, with multiplicity $2\cdot 2 d=4d$. The eigenfunctions are (we just write the $\mu$ entry for brevity)
\[\{\cos (2\pi t) \cos (2 \pi  x_i),\ \cos (2\pi t) \sin (2 \pi  x_i),\ \sin (2\pi t) \cos (2 \pi  x_i),\ \sin (2\pi t) \sin (2 \pi  x_i)\}_{i=1}^N.
\]

 Since $\sigma(T) < 0$ for $T < \overline T$ and $\sigma(T) > 0$ for $T > \overline T$ (in a neighbourhood of $\overline T$), we have a non-zero crossing number, in particular $\chi(A(T), \overline T) = 4d$. Hence Theorem \ref{bifu} applies.
 
 \end{proof}
 
 \begin{remark} Very little can be said about qualitative properties of $U_n, M_n$, but we can rule out the chance that they do not depend on time and that are just functions of the $x$-variable. In other words, our bifurcation result does not select non-trivial solutions that are stationary. We have to go through the proof of Theorem \ref{bifu}, that is based on the well-known Lyapunov-Schmidt reduction (see, e.g., \cite{Kiel}). Denote by $\Uu = (U, M, \overline H) \in X$ and by $P : X \to {\rm ker}(A(\overline T))$ the $L^2$-orthogonal projection. Then,
 \[
 G(\Uu, T) = 0 \Leftrightarrow
 \begin{cases}
 PG(\Vv + \Ww, T) = 0 & \Vv = P\Uu, \\
 (I-P)G(\Vv + \Ww, T) = 0 & \Ww = (I-P)\Uu.
 \end{cases}
 \]
 The Implicit Function Theorem assures the solvability of the second equation, namely
 \[
 \Ww = \psi(\Vv, T), \qquad \psi(0, T) = 0, \qquad \partial_\Vv \psi(0, \overline T) = 0.
 \]
 On the other hand, the finite dimensional {\it bifurcation equation}
 \begin{equation}\label{eq:be}
 PG(\Vv + \psi(\Vv, T), T) = 0, \qquad \Vv \in {\rm ker}(A(\overline T))
 \end{equation}
 is solved by means of Conley's index theory applied to the dynamical system $\dot{\Vv} = PG(\Vv + \psi(\Vv, T), T)$. Bifurcation is actually proven for the finite-dimensional problem, namely, there exists a sequence of non-trivial solutions $(\Vv_n, T) \to (0, \overline T)$ to \eqref{eq:be}. Recall that ${\rm ker}(A(\overline T))$ is spanned by functions of the form $\sin(2\pi t)\psi_k(x)$ and $\cos(2\pi t)\psi_k(x)$ in the $\mu$ entry (see Lemma \ref{linearized}). Therefore, $M_n$ in $\Uu_n = (U_n, M_n, \overline H_n)$ cannot be constant in the $t$-variable, otherwise $P \Uu_n = \Vv_n$ would be zero, contradicting the property that $ \Vv_n$ is non-trivial.
 
 Note that by the properties of $\psi$ we get the expansion
 \[
 \Uu_n = \Vv_n + o(|T_n- \overline T| + \|\Vv_n\|) \qquad \text{as $\Vv_n \to 0$}.
 \] 
 
 \end{remark}
 
\begin{remark} Note that $T^N = N(-4\pi^2 - f'(1))^{-1/2} = N \overline T$, for any integer $N \ge 2$, are other bifurcation times, namely there are sequences of solutions $(U^N_n, M^N_n, \overline H^N_n, T^N_n)$ clustering at $(0, 0, 0, T^N)$. Indeed, arguing as in Lemma \ref{linearized}, ${\rm ker}(A(T^N))$ is non-trivial. We observe that those solutions should not be qualitatively different from the ones of Theorem \ref{thm:bifu}, at least if one considers the original variables $u, m$.

Let us analyze the sequence $m_n$, and recall that $m_n(x, t) = 1 + M_n(x, t/T_n)$. By the previous remark, one should get that $M_n(x, t/T_n)$ is approximately an element of ${\rm ker}(A(\overline T))$, so it is a linear combination of $\sin(2\pi t/T_n)\psi_k(x)$ and $\cos(2\pi t/T_n)\psi_k(x)$, with $1/T_n \to 1/\overline T$. On the other hand, $m^N_n(x, t) = 1 + M^N_n(x, t/T^N_n)$, where $M^N_n(x, t/T^N_n)$ is approximately an element of ${\rm ker}(A(T^N))$. Such kernel is spanned by $\sin(2\pi N t)\psi_k(x)$ and $\cos(2\pi N t)\psi_k(x)$, therefore, $M^N_n(x, t/T^N_n)$ is approximately a linear combination of $\sin(2\pi N t/T^N_n)\psi_k(x)$ and $\cos(2\pi N t/T^N_n)\psi_k(x)$, and $N / T^N_n \to 1/\overline T$ independently on $N$.
 \end{remark}

\bibliographystyle{AIMS}

\providecommand{\href}[2]{#2}
\providecommand{\arxiv}[1]{\href{http://arxiv.org/abs/#1}{arXiv:#1}}
\providecommand{\url}[1]{\texttt{#1}}
\providecommand{\urlprefix}{URL }

\end{document}